\newtheorem{theorem}{Theorem}
\newtheorem{corollary}[theorem]{Corollary}
\newtheorem{definition}[theorem]{Definition}
\newtheorem{example}[theorem]{Example}
\newtheorem{lemma}[theorem]{Lemma}
\newtheorem{remark}[theorem]{Remark}
\newenvironment{proof}[1][Proof]{\textbf{#1.} }{\ \rule{0.5em}{0.5em}}
\begin{document}


\title{The Fibonacci--Redheffer matrix and its properties}

\author{Aristides V. Doumas\thanks{Department of Mathematics,
        School of Applied Mathematical and Physical Sciences,
        National Technical University of Athens, Zografou Campus,
        15780 Athens, Greece (adou@math.ntua.gr, \,ppsarr@math.ntua.gr).}
        \thanks{Archimedes/Athena Research Center, Greece.}\,
        and
        Panayiotis J. Psarrakos\footnotemark[1]}

\date{}

\maketitle

\vspace{-6mm}

\begin{abstract}
A Redheffer--type matrix with Fibonacci entries is introduced, and the determinant and spectral properties of this matrix are studied.
In particular, its determinant is derived and it is always negative, its eigenvalues are real and simple and, except the smallest eigenvalue,
they are located close to Fibonacci numbers, and its eigenvectors have all their entries nonzero.
Also, more general Redheffer--type matrices are considered, intriguing examples are illustrated, and several asymptotic results are discussed.
\end{abstract}

\textbf{Keywords.}  Redheffer matrix,
                    eigenvalue,
                    eigenvector,
                    Fibonacci numbers,
                    Fibonacci factorial,
                    rank one perturbation matrix,
                    asymptotics.

\smallskip

\vspace{-1mm}

\textbf{MSC 2020 Mathematics Classification.}   15A18,
                                                15B36,
                                                11B39,
                                                11C20.

\vspace{-1mm}

\section{Introduction}

The celebrated \textit{Redheffer matrix} was introduced in 1977 by R.M. Redheffer\footnote{Raymond Moos Redheffer was a mathematician-engineer who also built an electronic device for the famous game of \textit{Nim} demonstrating how mathematics can become a “thinking” machine.}
(see \cite{R1}) and defined as the $n \times n$ real matrix $R_n = \left [ r_{i,j} \right ]$ such that, for $i=1,2,\ldots,n$ and $j=1,2,\ldots,n$,
\[
  r_{i,j} = \begin{cases}
          1 & \text{when}\;\; i\mid j\;\; \text{or}\;\; j=1,\\
          0 & \text{otherwise}.
       \end{cases}\label{BEG}
\]
Hence, for $n=8$,
\[
  R_8 = \begin{pmatrix}
  1 & 1 & 1 & 1 & 1 & 1 & 1 & 1 \\
  1 & 1 & 0 & 1 & 0 & 1 & 0 & 1 \\
  1 & 0 & 1 & 0 & 0 & 1 & 0 & 0 \\
  1 & 0 & 0 & 1 & 0 & 0 & 0 & 1 \\
  1 & 0 & 0 & 0 & 1 & 0 & 0 & 0 \\
  1 & 0 & 0 & 0 & 0 & 1 & 0 & 0 \\
  1 & 0 & 0 & 0 & 0 & 0 & 1 & 0 \\
  1 & 0 & 0 & 0 & 0 & 0 & 0 & 1
  \end{pmatrix}  .
\]

Redheffer proved that
\begin{equation}
  \det ( R_n ) = M(n),     \label{1}
\end{equation}
where $M(n)$ is the {\it Mertens function} defined for any positive integer $n$, as
$
     M(n) := \sum\limits_{j=1}^n \mu(j) ,
$
and $\mu(j)$ denotes (as usual) the {\it M{\"o}bius function}, namely,
\[
     \mu(j) = \begin{cases}
              0       & \text{if $\,j\,$ has one or more repeated prime factors},\\
              1       & \text{if $\,j=1$},\\
              (-1)^{k}& \text{if $\,j\,$ is a product of $k$ distinct primes}.
              \end{cases}
\]
An alternative proof of (\ref{1}) has been presented in \cite{BOR} based upon the $LU$ decomposition of the Redheffer matrix. Relation (\ref{1}) is of great importance due to its connection with the Riemann hypothesis.
It is well known that the Riemann hypothesis is true if and only if
\begin{equation}
      M(n) = O(n^{1/2 +\varepsilon}) \quad \text{for any}\;\; \varepsilon > 0 ;     \label{RH}
\end{equation}
for details, see e.g., Littlewood \cite{LIT} and Titchmarsh \cite{TIT}.

For an $n \times n$ complex matrix $A$, recall that the \textit{characteristic polynomial} of $A$ is defined as the $n$-th degree monic polynomial $\chi_A (z)  = \det \left( z\, I_n - A \right)$,
where $z$ is a complex variable and $I_n$ denotes the $n \times n$ identity matrix. The roots of $\chi_A (z)$ are known as the \textit{eigenvalues} of the matrix $A$.
The set $\sigma(A) = \left \{ \lambda \in \mathbb{C} : \, \chi_A (\lambda) = 0 \right \}$ of all eigenvalues of $A$ is called the
\textit{spectrum} of $A$, and the nonnegative quantity $\rho (A) = \max \left \{ |\lambda| : \, \lambda \in \sigma(A) \right \}$ is called the \textit{spectral radius} of $A$.
Moreover, for an eigenvalue $\lambda$ of $A$, any nonzero vector $\mathbf{x}$ such that $\left ( A - \lambda\, I_n \right ) \mathbf{x} = \mathbf{0}$ is known as
a \textit{(right) eigenvector} of $A$ corresponding to $\lambda$. The eigenvectors of
the conjugate transpose matrix $A^*$ (corresponding to the conjugates of the eigenvalues of $A$) are called \textit{left eigenvectors} of $A$ \cite{HJ,TZ22}.
The \textit{algebraic multiplicity} of an eigenvalue $\lambda \in \sigma(A)$ is the multiplicity of $\lambda$ as a root of the characteristic polynomial $\chi_A (z)$,
and it is always greater than or equal to the \textit{geometric multiplicity} of $\lambda$, that is, the dimension of the null space of the matrix $A - \lambda\,I_n$.
This null space is known as the \textit{eigenspace} of $A$ corresponding to the eigenvalue $\lambda$.
The book of Horn and Johnson \cite{HJ} is proposed as a general reference on matrices.

The spectral structure of the Redheffer matrix $R_n$ has attracted the interest of researchers, since thanks to relation (\ref{1}),
$M(n) = \prod\limits_{j=1}^{n} \lambda_{j}$, where $\lambda_1 , \lambda_2 , \ldots , \lambda_n$ are the $n$ (not necessarily distinct) eigenvalues of $R_n$.
We refer the interested reader to \cite{BFP}, \cite{BJ},                         
\cite{CS}, \cite{J}, \cite{RB}, \cite{v1}, and \cite{v2} for well known results on the Redheffer matrix;
at the beginning of Section 3, we will present a short overview of some of these results.
To highlight how structured growth influences spectral behavior, we introduce a matrix with the same zero pattern as $R_n$, where the $1$'s in each $i$-th row
(except those in the first column below the $(1,1)$-th entry) are replaced by the $i$-th element of the Fibonacci sequence. By choosing the Fibonacci sequence,
which is a positive, rapidly increasing sequence, we aim to emphasize and better understand the resulting spectral differences from the Redheffer matrix.
>From here and in what follows, we adopt the following definition for the Fibonacci numbers $F_1 , F_2 , F_3 , \ldots\,$:
\[
    F_1 = F_2 = 1   \quad\;\; \text{and} \quad\;\;   F_n = F_{n-1} + F_{n-2}   \;\;\;   (n = 3 , 4 , \ldots) .
\]

\begin{definition}   \em
We define the \textit{Fibonacci--Redheffer matrix} $F_R(n) = \left [ F_R(i,j) \right ]$ such that, for $i=1,2,\ldots,n$ and $j=1,2,\ldots,n$,
\begin{equation}
  F_R(i,j) : = \begin{cases}
          1 & \text{if}\;\; j=1,\\
          F_{i} & \text{if}\;\; i\mid j,\\
          0 & \text{otherwise}.                    \label{2}
       \end{cases}
\end{equation}
\end{definition}

For example,
\[
  F_R(8) =
  \left(
  \begin{array}{cccc:cccc}
  1 & 1 & 1 & 1 & 1 & 1 & 1 & 1 \\
  1 & 1 & 0 & 1 & 0 & 1 & 0 & 1 \\
  1 & 0 & 2 & 0 & 0 & 2 & 0 & 0 \\
  1 & 0 & 0 & 3 & 0 & 0 & 0 & 3 \\ \hdashline
  1 & 0 & 0 & 0 & 5 & 0 & 0 & 0 \\
  1 & 0 & 0 & 0 & 0 & 8 & 0 & 0 \\
  1 & 0 & 0 & 0 & 0 & 0 & 13 & 0 \\
  1 & 0 & 0 & 0 & 0 & 0 & 0 & 21 \\
  \end{array}
  \right) .
\]

\begin{remark} \label{firstremark}    \em
If we look the Fibonacci--Redheffer matrix above as a $2\times 2$ block matrix, then we observe that the upper left block of $F_{R}(n)$ is also a Fibonacci--Redheffer matrix
of size $\lfloor \frac{n}{2} \rfloor$ (in this case, we have the matrix $F_{R}(4))$, where $\lfloor x \rfloor$ denotes the {\it floor function},
i.e., the function that takes as input a real number $x$ and gives as output the greatest integer less than or equal to $x$.
The lower left block is a matrix with ones in the first column and all the other entries equal to zero,
and the lower right block is a diagonal matrix with diagonal entries the Fibonacci numbers starting from $F_{\lfloor \frac{n}{2} \rfloor+1}$.
\end{remark}

\begin{remark} \label{sparse}    \em
The $n \times n$ matrices $F_{R}(n)$ and $R_{n}$ are entrywise nonnegative and share the same zero pattern.
Let $S_{n}$ denote the total number of nonzero entries of the matrix $R_{n}$ (and thus, also of $F_{R}(n)$).
The definitions of the above two matrices imply that the number of nonzero entries in each column $j$ is given by the {\it divisor function} $d(j)$, for $j>1$.
Hence, we have $S_{n}= n + \sum\limits_{j=2}^{n} d(j)$.
By applying the celebrated Dirichlet hyperbola method to the divisor sum, we get (see, e.g., \cite{APOSTOL})
$S_{n} = n \ln n+2 \gamma n + O(\sqrt{n})$, as $n \rightarrow \infty$,
where $\gamma = 0.5772\cdots$ is the \textit{Euler$-$Mascheroni constant}.
\end{remark}

The outline of the rest of the paper follows.
In the next section, the determinant of the Fibonacci--Redheffer matrix $F_R (n)$ is derived for all positive integers $n$, and it is observed that it is always negative.
The asymptotic behaviour of the determinant as $n \rightarrow \infty$ is studied as well.
In Section 3, it is proved that the eigenvalues of $F_R (n)$ are real and simple, and all the entries of the eigenvectors of $F_R (n)$ are nonzero.
Moreover, the eigenvalues of $F_R (n)$, except the smallest one (which is the only negative eigenvalue), are located close to Fibonacci numbers.
Finally, in Section 4, the asymptotic behaviour of the determinant of more general Redheffer--type matrices is studied through some intriguing examples.

\section{The determinant of $F_R(n)$}

Let $n!_{F}$ denote the {\it Fibonacci factorial}, which is defined as the product of the first $n$ positive Fibonacci numbers, namely,
\begin{equation} \label{4}
    n!_{F} = \prod_{i=1}^{n}F_{i}.
\end{equation}

\begin{theorem}  \label{firsttheorem}
Consider the Fibonacci--Redheffer matrix defined by (\ref{2}).
If $\lambda_1 , \lambda_2 , \ldots , \lambda_n$ are the $n$ (not necessarily distinct) eigenvalues of $F_R(n)$, then
\[
  \det ( F_R (n) )  =  \prod_{j=1}^{n}\lambda_j =  n!_{F}\sum_{k=1}^{n}\frac{\mu(k)}{F_{k}} .
\]
\end{theorem}

It is worth noting that the Fibonacci--Redheffer matrix $F_R (n)$ is essentially a weighted version of the Redheffer matrix $R_n$,
and as a consequence, its determinant is a weighted sum over the permutations compatible with $R_n$ (i.e., a weighted sum over the M{\"o}bius function).
For the proof of this theorem, we need the following representation of the matrix $F_R (n)$:
\begin{equation}
  F_R (n) = C (n) + D (n) , \label{rep}
\end{equation}
where, for $i=1,2,\ldots,n$ and $j=1,2,\ldots,n$, the entries of the matrix $C(n) = \left [ c_{i,j}\right ]$ are
\begin{equation}
  c_{i,j} = \begin{cases}
          1 & \text{if and only if} \;\; j=1 \;\; \text{and} \;\; i \neq 1,\\
          0 & \text{otherwise}
       \end{cases} \label{C}
\end{equation}
and the entries of the matrix $D(n) = \left [ d_{i,j}\right ]$ are
\begin{equation}
  d_{i,j} = \begin{cases}
          F_i & \text{if and only if} \;\; i\mid j,\\
            0 & \text{otherwise}.
       \end{cases} \label{D}
\end{equation}
For example,
\begin{equation*}
  F_R(8) =
  C(8) + D(8) =
  \left( \begin{array}{cccccccc}
  0 & 0 & 0 & 0 & 0 &0 &0 &0 \\
  1 & 0 & 0 & 0 & 0 &0 &0 &0 \\
  1 & 0 & 0 & 0 & 0 &0 &0 &0 \\
  1 & 0 & 0 & 0 & 0 &0 &0 &0 \\
  1 & 0 & 0 & 0 & 0 &0 &0 &0 \\
  1 & 0 & 0 & 0 & 0 &0 &0 &0 \\
  1 & 0 & 0 & 0 & 0 &0 &0 &0 \\
  1 & 0 & 0 & 0 & 0 &0 &0 &0
  \end{array} \right)
  +
  \left( \begin{array}{cccccccc}
  1 & 1 & 1 & 1 & 1 &1 &1 &1 \\
  0 & 1 & 0 & 1 & 0 &1 & 0 &1 \\
  0 & 0 & 2 & 0 & 0 &2 & 0 & 0 \\
  0 & 0 & 0 & 3 & 0 &0 & 0 & 3 \\
  0 & 0 & 0 & 0 & 5 &0 & 0 & 0 \\
  0 & 0 & 0 & 0 & 0 &8 & 0 & 0 \\
  0 & 0 & 0 & 0 & 0 &0 & 13 & 0 \\
  0 & 0 & 0 & 0 & 0 &0 & 0 & 21
  \end{array} \right) .
\]
Note that the matrix $ C(n)$ is clearly a rank one singular matrix.
The upper triangular matrix $D(n)$ is, of course, nonsingular since
\begin{equation}
    \det (D(n)) = \prod_{j=1}^{n} F_{j} = n!_{F} ,    \label{fibonorial}
\end{equation}
where $n!_{F}$ is the Fibonacci factorial defined by (\ref{4}).

Let us now turn our attention to the matrix $D(n)$. In particular, we will derive its inverse.

\begin{lemma} \label{firstlemma}
The inverse of matrix $D(n)$ is given by the formula
\begin{equation*}
D(n)^{-1} := \left [ \tilde{d}_{i,j} \right ] , \;\; \text{where} \quad
          \tilde{d}_{i,j} = \begin{cases}
          \dfrac{1}{F_j} \, \mu \left(\dfrac{j}{i}\right) & \text{if}\;\; i\mid j,\\
          0 & \text{otherwise}.
       \end{cases}
\end{equation*}
\end{lemma}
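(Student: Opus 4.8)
The plan is to verify directly that the proposed matrix, call it $\tilde{D}(n) = \left[ \tilde{d}_{i,j} \right]$, satisfies $D(n)\,\tilde{D}(n) = I_n$. Since $D(n)$ is a finite square matrix that is already known to be nonsingular (it is upper triangular with the nonzero Fibonacci numbers on its diagonal, cf. (\ref{fibonorial})), producing a right inverse is enough to identify $\tilde{D}(n)$ as $D(n)^{-1}$. Note that both $D(n)$ and $\tilde{D}(n)$ are supported on the divisibility relation $i \mid j$, so the candidate is exactly what one expects from M\"obius inversion on the divisor poset.

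First I would write out the $(i,k)$ entry of the product. Since a nonzero factor $d_{i,j} = F_i$ requires $i \mid j$, and a nonzero factor $\tilde{d}_{j,k} = \mu(k/j)/F_k$ requires $j \mid k$, the only surviving terms are those with $i \mid j \mid k$, which in particular forces $i \mid k$. Collecting them gives
\[
  \big( D(n)\,\tilde{D}(n) \big)_{i,k} = \sum_{j:\, i \mid j,\; j \mid k} F_i \cdot \frac{\mu(k/j)}{F_k} = \frac{F_i}{F_k} \sum_{i \mid j \mid k} \mu\!\left( \frac{k}{j} \right) .
\]
(When $i \nmid k$ the sum is empty and the entry is $0$, consistent with the upper triangular structure of the product.)

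The key step is to reparametrize the inner sum. Assuming $i \mid k$, set $N := k/i$ and write $j = i\,m$; as $j$ ranges over the multiples of $i$ that divide $k$, the integer $m$ ranges precisely over the divisors of $N$, and then $k/j = N/m$ ranges bijectively over those same divisors. Hence
\[
  \sum_{i \mid j \mid k} \mu\!\left( \frac{k}{j} \right) = \sum_{m \mid N} \mu\!\left( \frac{N}{m} \right) = \sum_{d \mid N} \mu(d) .
\]
By the fundamental summatory property of the M\"obius function, $\sum_{d \mid N} \mu(d) = [\,N = 1\,]$, i.e., it equals $1$ precisely when $k = i$ and $0$ otherwise.

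Substituting back, the $(i,k)$ entry equals $F_i/F_k$ times this indicator, which is $1$ when $i = k$ (since then $F_i/F_k = 1$) and $0$ in all other cases. Therefore $D(n)\,\tilde{D}(n) = I_n$, establishing the claimed formula. I do not anticipate a serious obstacle: the entire content is the M\"obius identity $\sum_{d \mid N} \mu(d) = [\,N = 1\,]$, and the only care required is the bookkeeping of the substitution $j = im$ together with the observation that the inner sum is nonempty exactly when $i \mid k$.
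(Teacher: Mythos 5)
Your proposal is correct and follows essentially the same route as the paper: both verify entrywise that the product of $D(n)$ with the candidate matrix is $I_n$, reduce the surviving terms to a sum over the chain $i \mid j \mid k$, reparametrize via $j = im$, and conclude with the M\"obius summatory identity $\sum_{d \mid N} \mu(d) = 1$ if $N=1$ and $0$ otherwise. The only cosmetic difference is your explicit remark that a right inverse of a nonsingular square matrix is the inverse, which the paper leaves implicit.
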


For example,
$$
  D(8)^{-1}=\begin{pmatrix}
  1 & -1 & -\frac{1}{2} & 0 & -\frac{1}{5} &\frac{1}{8} &-\frac{1}{13} &0\\
  0 & 1 & 0 & -\frac{1}{3} & 0 &-\frac{1}{8} & 0 &0\\
  0 & 0 & \frac{1}{2} & 0 & 0 &-\frac{1}{8} & 0 & 0 \\
  0 & 0 & 0 & \frac{1}{3} & 0 &0 & 0 & -\frac{1}{21}  \\
  0 & 0 & 0 & 0 & \frac{1}{5} &0 & 0 & 0\\
  0 & 0 & 0 & 0 & 0 &\frac{1}{8} & 0 & 0\\
  0 & 0 & 0 & 0 & 0 &0 & \frac{1}{13} & 0\\
  0 & 0 & 0 & 0 & 0 &0 & 0 & \frac{1}{21}\\
  \end{pmatrix} .
$$

\smallskip

\textbf{Proof of Lemma \ref{firstlemma}.}\
It suffices to obtain that the product $D(n) D(n)^{-1}$ coincides with the $n \times n$ identity matrix $I_n$.
The $(i,j)$-th entry of the product $D(n)D(n)^{-1}$ is $q_{i,j} = \sum\limits_{k=1}^{n} d_{i,k} \tilde{d}_{k,j}$.
The term $d_{i,k} \tilde{d}_{k,j}$ is equal to $0$, unless $i \mid k$ and $k \mid j$.
Hence, if $i$ does not divide $j$, then we have $q_{i,j}=0$.
On the other hand, if $i \mid j$, then there exists a positive integer $m$ such that $k = m \cdot i$, and since $k \mid j$, it follows that $m \mid \dfrac{j}{i}$.
For convenience, we set $d := \dfrac{j}{i}$.
Then, we observe that
\[
  q_{i,j} = \sum\limits_{\tiny{\begin{array}{c} i\mid k \\ k\mid j \end{array}}} \dfrac{F_{i}}{F_{j}} \, \mu\left(\dfrac {j}{k} \right)
          = \sum_{m \mid \frac{j}{i}} \dfrac{F_{i}}{F_{j}} \, \mu \left(\frac{\frac{j}{i}}{m} \right)
          = \dfrac{F_{i}}{F_{j}} \sum_{m \mid d} \mu \left(\dfrac{d}{m} \right) .
\]
Now,
\[
  \sum_{m \mid d}\mu\left(\frac{d}{m}\right) = \begin{cases}
          1 & \text{if} \;\; d=1,\;\; \text{i.e.}, \;\; \text{if} \;\; i=j,\\
          0 & \text{otherwise}
       \end{cases}\label{mobius}
\]
(see, e.g. \cite{APOSTOL} for details). Thus,
\[
  q_{i,j} = \begin{cases}
          1 & \text{if} \;\; i=j,\\
          0 & \text{otherwise},
       \end{cases}
\]
which completes the proof. $\hfill \blacksquare$

\bigskip

Having Lemma \ref{firstlemma}, we proceed with the determinant $\det (F_R (n))$.

\textbf{Proof of Theorem \ref{firsttheorem}.}\
By (\ref{rep}), it follows
\begin{eqnarray*}
  \det (F_R (n))    &=& \det \left (  D(n) D(n)^{-1} F_R (n) \right ) \\
                    &=& \det (D(n)) \det \left ( D(n)^{-1} \left( C(n) + D(n) \right )\right ) \\
                    &=& \det (D(n)) \det \left ( D(n)^{-1} C(n) + I_n \right ) .
\end{eqnarray*}
Moreover, the matrix $D(n)^{-1} C(n)$ is lower triangular with all its diagonal entries zero except the first one which is equal to
$\,-1 - \dfrac{1}{2} - \dfrac{1}{5} + \dfrac{1}{8} - \dfrac{1}{13} + \cdots$.
Hence, for the matrix $D(n)^{-1}C(n)+I(n)$, one immediately can see that
\[
   \det \left ( D(n)^{-1}C(n) + I_n \right ) = \sum_{k=1}^{n} \dfrac{ \mu (k)}{F_k} .
\]
The proof of Theorem \ref{firsttheorem} is completed by invoking relation (\ref{fibonorial}).       $\hfill\blacksquare$

\bigskip

The next theorem is entertaining.

\begin{theorem}  \label{secondtheorem}
As $n\rightarrow \infty$, the following asymptotic result holds:
\begin{equation}\label{kou}
  \sum_{k=1}^{n} \dfrac{\mu(k)}{F_{k}} \sim C ,
\end{equation}
meaning (for series) that $\lim\limits_{n\rightarrow \infty} \sum\limits_{k=1}^{n} \dfrac{\mu(k)}{F_{k}} = C$, where
\begin{equation}
   C \approx - 0.64572472\cdots \label{Ca}
\end{equation}
Moreover, for all positive integers $n \geq 3$, it holds
\begin{equation}\label{kou1}
  \sum_{k=1}^{n} \dfrac{\mu(k)}{F_{k}} < 0 ,
\end{equation}
i.e., all the partial sums of (\ref{kou}) are negative for $n \geq 3$.
\end{theorem}

\begin{proof}
Let $\phi := \dfrac{1+\sqrt{5}}{2}$ be the golden ratio.
Then, by Binet's formula \cite{EMS10,KPG}, it clearly follows
\[
    F_n \sim \dfrac{\phi^{n}}{\sqrt{5}} \quad \text{as} \;\; n\rightarrow \infty ,
\]
where the symbol $\sim$ has the following meaning (for sequences): $a_n \sim b_n$ as $n\rightarrow \infty$ if and only if $\lim \dfrac{b_n}{a_n}=1$.
The series of relation (\ref{kou}) is absolutely convergent.
Thus, the leading term in its asymptotic expansion is a constant (as we will see, the one mentioning in (\ref{Ca})).
Let us now verify that the series converges actually to a negative number.
It is well known, and easy to check, that for all positive integers $n$,
\begin{equation}
     F_n \geq \phi^{n-2}.       \label{phi}
\end{equation}
The desired verification follows directly by the strong (second) principle of mathematical induction (see, e.g., \cite{DK} where this inequality is stated  as an exercise).

Let $k_0$ be a fixed positive integer. We observe that
\begin{equation}
   \left | \sum_{k>k_{0}}^{\infty}\frac{\mu(k)}{F_{k}} \right | \leq  \dfrac{\phi^{2-k_0}}{\phi -1},      \label{kou2}
\end{equation}
where we have used (\ref{phi}) and summed the resulting infinite geometric series.
Having the upper bound of relation (\ref{kou2}), we are able to calculate the partial sums
\begin{equation}
      \sum_{k=1}^{k_{0}}\frac{\mu(k)}{F_{k}}     \label{kou3}
\end{equation}
and prove that they are negative. If, for example, $k_0 =12$, then (\ref{kou2}) implies
\begin{equation}
   \left | \sum_{k=13}^{\infty} \dfrac{\mu(k)}{F_{k}} \right | <  0.013192 . \label{kou4}
\end{equation}
On the other hand,
\begin{equation}
    \sum_{k=1}^{12}\frac{\mu(k)}{F_{k}} = \frac{1}{1}-\frac{1}{1}-\frac{1}{2}
    +\frac{0}{3}-\frac{1}{5}+\frac{1}{8}-\frac{1}{13}+\frac{0}{21}+\frac{0}{34}
    +\frac{1}{55}-\frac{1}{89}+\frac{0}{144}
    = -0.6449772\cdots,
    \label{kou5}
\end{equation}
and hence, (\ref{kou}) is true.
Now, it is easy to check that all the partial sums of (\ref{kou3}) from $k_0 = 3$ up to $k_0=12$ are negative.\footnote{For $k_0 =2$, the sum is zero.} Observe that for all positive integers $n \geq 13$, we have
\[
  \sum_{k=1}^{n} \dfrac{\mu(k)}{F_k} = \sum_{k=1}^{12} \dfrac{\mu(k)}{F_k} + \sum_{k=13}^{n} \dfrac{\mu(k)}{F_k}.
\]
Following the same steps as before, we can easily verify that (\ref{kou1}) holds.
Notice that regarding the value of the constant $C$ in (\ref{Ca}), one attains high precision convergence for the series of the theorem,
by invoking relations (\ref{kou4}) and ({\ref{kou5}}) (mainly, due to the growth of the Fibonacci numbers).   $\hfill$
\end{proof}

The asymptotics of the determinant $\det (F_R (n))$ are presented in the following result.

\begin{theorem}
Let $F_R (n)$ be the $n \times n$ Fibonacci--Redheffer matrix defined by (\ref{2}).
Then, for the leading behaviour of the product of its eigenvalues as $n \rightarrow \infty$, we have
\begin{equation}
     \det (F_R (n)) \sim C_0\, \phi^{n \frac{\left(n+1\right)}{2}}5^{-\frac{n}{2}},   \label{ft}
\end{equation}
where
\begin{equation}
C_0\approx -0.7921376. \label{123}
\end{equation}
In particular, $C_0 = C \cdot C_\phi$, where $C$ is defined in (\ref{kou}) and (\ref{Ca}), and
\begin{equation}
    C_\phi := \left(1-b\right) \left(1-b^{2}\right) \left(1-b^{3}\right) \cdots \approx 1.226742
    \quad \mbox{with} \;\;
    b := - \phi^{-2}.       \label{b}
\end{equation}
\end{theorem}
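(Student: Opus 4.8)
The plan is to combine the two results already in hand. Theorem \ref{firsttheorem} supplies the exact identity $\det(F_R(n))=n!_{F}\sum_{k=1}^{n}\mu(k)/F_k$, while Theorem \ref{secondtheorem} shows that the sum converges to the nonzero constant $C$. Consequently the entire problem reduces to determining the precise asymptotics of the Fibonacci factorial $n!_{F}=\prod_{i=1}^{n}F_i$, and the natural tool for this is Binet's formula, which already appeared in the previous proof.

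First I would rewrite each Fibonacci number in the product form dictated by the definition of $b$ in (\ref{b}). Setting $\psi=(1-\sqrt5)/2=-\phi^{-1}$, Binet's formula gives $F_i=(\phi^i-\psi^i)/\sqrt5=\frac{\phi^{i}}{\sqrt5}\bigl(1-b^{i}\bigr)$ with $b=-\phi^{-2}$, since $\psi^i=(-1)^i\phi^{-i}=b^i\phi^{i}\cdot(\text{adjust})$ collapses to $(\psi/\phi)^i=b^i$. Multiplying over $i=1,\dots,n$ then separates cleanly into three factors:
\begin{equation*}
n!_{F}=\prod_{i=1}^{n}\frac{\phi^{i}}{\sqrt5}\bigl(1-b^{i}\bigr)
=\phi^{\,n(n+1)/2}\,5^{-n/2}\prod_{i=1}^{n}\bigl(1-b^{i}\bigr),
\end{equation*}
because $\sum_{i=1}^{n}i=n(n+1)/2$ and $(\sqrt5)^{n}=5^{n/2}$. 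This identity is exact, not merely asymptotic, which is what makes the subsequent limit argument transparent.

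It then remains to pass to the limit in the residual product. Since $|b|=\phi^{-2}<1$, the series $\sum_i|b^i|$ converges, so the infinite product $\prod_{i=1}^{\infty}(1-b^{i})=C_\phi$ converges absolutely to a finite nonzero value; this is precisely the constant $C_\phi$ of (\ref{b}). Hence $n!_{F}\sim C_\phi\,\phi^{\,n(n+1)/2}5^{-n/2}$. Multiplying the exact identity of Theorem \ref{firsttheorem} and using Theorem \ref{secondtheorem} gives
\begin{equation*}
\frac{\det(F_R(n))}{\phi^{\,n(n+1)/2}5^{-n/2}}
=\Bigl(\prod_{i=1}^{n}(1-b^{i})\Bigr)\Bigl(\sum_{k=1}^{n}\frac{\mu(k)}{F_k}\Bigr)\longrightarrow C_\phi\cdot C,
\end{equation*}
which is exactly (\ref{ft}) with $C_0=C\cdot C_\phi$, and the numerical check $C_0\approx(-0.64572472)(1.226742)\approx-0.7921376$ confirms (\ref{123}).

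There is no serious obstacle here; the only points needing care are routine. One must confirm that $C_\phi$ genuinely converges (guaranteed by $|b|<1$) and, crucially, that it is \emph{nonzero}, so that dividing by $\phi^{\,n(n+1)/2}5^{-n/2}$ and then by $C_0$ preserves the relation in the direction demanded by the paper's convention $\lim(b_n/a_n)=1$. One should also note that $1-b^{i}\neq0$ for every $i\ge1$, so no $F_i$ vanishes and the term-by-term factorization is legitimate. The mildest subtlety is simply that the product of two asymptotic equivalences is valid because both limiting factors, $C_\phi$ and $C$, are finite and nonzero.
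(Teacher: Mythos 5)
Your proof is correct and follows essentially the same route as the paper: both factor $F_k=\frac{\phi^{k}}{\sqrt5}\left(1-b^{k}\right)$ via Binet's formula, extract $\phi^{\,n(n+1)/2}5^{-n/2}$ exactly, show the residual partial products converge to the nonzero constant $C_\phi$, and then multiply by the limit $C$ from Theorem \ref{secondtheorem} inside the exact identity of Theorem \ref{firsttheorem}. The only difference is presentational: the paper works with $\sum_{k}\ln F_k$ and bounds the tail $\sum_{k=n+1}^{\infty}\ln\left(1-b^{k}\right)$ by $b^{n}$, whereas you argue directly on the product via absolute convergence, which is equivalent.
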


\begin{proof}
Let $b$ as defined in (\ref{b}).
It is an easy exercise for one to have an asymptotic expression for the Fibonacci factorial.
In particular,
\begin{eqnarray}
 \sum_{k=1}^{n}\ln F_k &=&  \sum_{k=1}^{n}\left(\ln \phi ^k-\ln \sqrt{5}+\ln\left(1-b^k\right)\right) \nonumber \\
                       &=&  \dfrac{n\left(n+1\right)}{2}\ln \phi -\dfrac{n}{2}\ln5+\sum_{k=1}^{\infty}\ln\left(1-b^k\right)-\sum_{k=n+1}^{\infty}\ln\left(1-b^k\right) .  \label{rial}
\end{eqnarray}
Since the last sum of (\ref{rial}) is bounded by $b^{n}$, we have
\[
     \prod_{k=1}^{n}F_k \sim C_\phi\, \phi^{n \frac{(n+1)}{2}} 5^{-\frac{n}{2}} ,
\]
see e.g., \cite{KPG} for details. Notice that we have used the fact that the infinite product appearing in (\ref{b}) clearly converges. Now (\ref{ft}) follows immediately by invoking (\ref{kou}), (\ref{Ca}) and (\ref{rial}) in Theorem \ref{firsttheorem}, with $C_0$ as defined in (\ref{123}). $\hfill$
\end{proof}

\section{Spectral properties of the Fibonacci--Redheffer matrix}

For clarity, let us first present a short overview regarding well known results for the original Redheffer matrix defined in Section 1.
The matrix $R_{n}$ has a simple real positive eigenvalue $\lambda^{+}$ asymptotic to $\sqrt{n}$, and a simple real negative eigenvalue $\lambda^{-}$ asymptotic to $-\sqrt{n}$, as well.
In particular,
\[
      \lambda^{\pm} = {\pm} \sqrt{n} + \log \sqrt{n} + \gamma - \dfrac{3}{2} + O \left(\dfrac{\log^{2} n}{\sqrt{n}}\right)  \quad \text{as} \;\;  n \rightarrow \infty ,
\]
where $\gamma = 0.5772\cdots$ is the Euler$-$Mascheroni constant.
Furthermore, $\lambda = 1$ is an eigenvalue of algebraic multiplicity $n - \lfloor \log_{2}n\rfloor -1$.           
Finally, there are $\lfloor \log_{2}n\rfloor -1$ ``small'' eigenvalues meaning that given an $\varepsilon > 0$,
and for sufficiently large $n$, these eigenvalues have relatively small moduli (see \cite{BJ}).
In particular, they lie inside the circle $\left \{ z \in \mathbb{C} :\, |z| < \log_{2-\varepsilon} n \right \}$.
Hence, for the spectral radius $\rho(R_{n})$ of $R_{n}$, we have
\[
          \rho(R_{n})\sim \sqrt{n}  \quad \text{as} \;\;  n \rightarrow \infty.
\]
A remarkable conjecture states that the ``small'' eigenvalues have negative real part; see \cite{BJ}.

Regarding the eigenvectors, it is known that the eigenspace of $R_{n}$ corresponding to the eigenvalue $1$ has dimension $\left \lceil \frac{n}{2} \right\rceil -1$,
where $\left\lceil x \right\rceil $ is the {\it ceiling function}, that is, the upper integer part of $x$.
It follows that $R_{n}$ is not diagonalizable for $n\geq 5$.
Every eigenspace of $R_{n}$ corresponding to an eigenvalue different than $1$ has dimension $1$.
Finally, the Jordan structure of the matrix corresponding to the eigenvalue $1$ has been studied systematically in \cite{RB}.

We return now to the Fibonacci--Redheffer matrix $F_{R}(n)$ defined by (\ref{2}).
As we will see, the spectral structures of $F_{R}(n)$ and $R_n$ appear to have many differences, although the two matrices are entrywise nonnegative and have the same zero pattern.
These differences are due to the fact that the Fibonacci sequence is rapidly increasing.

Applying Theorems \ref{firsttheorem} and \ref{secondtheorem}, one immediately verifies that the Fibonacci--Redheffer matrix is always nonsingular for $n\geq 3$
(also true for the trivial case $n=1$, while for $n=2$, the matrix has zero determinant).
Observe that, this is not always true for the classical Redheffer matrix $R_{n}$;
recall that its determinant is equal to the Mertens function (see relation (\ref{1})), which has an infinite number of zeros.
To proceed, we need the following lemmas.

\begin{lemma}  \label{firstnonzero}
Let $\lambda$ be an eigenvalue of the Fibonacci--Redheffer matrix $F_{R}(n)$, with a corresponding eigenvector $\mathbf{x} = \left [ x_i \right ]$.
Then, the first entry $x_1$ of the eigenvector $\mathbf{x}$ is nonzero.
\end{lemma}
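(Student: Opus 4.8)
The plan is to argue by contradiction: I assume that $x_1 = 0$ for some eigenpair $(\lambda,\mathbf{x})$ with $\mathbf{x} \neq \mathbf{0}$ and derive $\mathbf{x} = \mathbf{0}$. The starting point is the decomposition (\ref{rep}), $F_R(n) = C(n) + D(n)$. Since the only nonzero column of $C(n)$ is the first, a direct computation gives $C(n)\mathbf{x} = x_1\,(0,1,\ldots,1)^T$, so the assumption $x_1 = 0$ collapses the eigenvalue equation to $D(n)\mathbf{x} = \lambda\mathbf{x}$; that is, $\mathbf{x}$ would have to be an eigenvector of the upper triangular matrix $D(n)$. Reading this row by row, the first row (which is $(1,1,\ldots,1)$) yields the constraint $\sum_{j=1}^{n} x_j = \lambda x_1 = 0$, hence $\sum_{j=2}^{n} x_j = 0$, while rows $2,\ldots,n$ say exactly that $\mathbf{x}' := (x_2,\ldots,x_n)^T$ lies in the kernel of $D'(n) - \lambda I_{n-1}$, where $D'(n)$ is the trailing $(n-1)\times(n-1)$ principal submatrix of $D(n)$ (rows and columns indexed by $2,\ldots,n$).

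Because $x_1 = 0$, a nonzero $\mathbf{x}$ forces $\mathbf{x}' \neq \mathbf{0}$, so $\lambda$ must be an eigenvalue of $D'(n)$. As $D'(n)$ is upper triangular with diagonal entries $F_2 < F_3 < \cdots < F_n$ (strictly increasing, hence distinct), we get $\lambda = F_m$ for a unique index $m \in \{2,\ldots,n\}$, with a one-dimensional eigenspace. The degeneracy $F_1 = F_2 = 1$ is harmless here, since the value $1$ occurs only once on the diagonal of $D'(n)$ (at position $2$). Everything thus reduces to understanding the single eigenvector $\mathbf{v}$ generating $\ker(D'(n) - F_m I_{n-1})$, and in particular its entry-sum $\mathbf{1}^{T}\mathbf{v}$.

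I would pin down $\mathbf{v}$ by backward substitution, reading the rows of $(D'(n) - F_m I_{n-1})\mathbf{v} = \mathbf{0}$ from $i=n$ down to $i=2$. Row $i$ reads $(F_i - F_m)v_i + F_i \sum_{i \mid j,\, j > i} v_j = 0$. For $i > m$ one gets $v_i = 0$ by induction on decreasing $i$ (the coefficient $F_i - F_m$ is nonzero and the sum involves only already-vanishing entries); at $i = m$ the equation is vacuous, so I normalize $v_m = 1$; and for $2 \le i < m$ with $i \nmid m$ one again gets $v_i = 0$, so that $\mathbf{v}$ is supported on the divisors $d \ge 2$ of $m$. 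On that support the recursion reads $v_d = \frac{F_d}{F_m - F_d}\sum_{d \mid e \mid m,\, e > d} v_e$ with strictly positive leading coefficient $\frac{F_d}{F_m - F_d} > 0$ (since $2 \le d < m$ gives $0 < F_d < F_m$). A downward induction over the divisor lattice of $m$ then shows $v_d > 0$ for every divisor $d \ge 2$ of $m$ (the right-hand sum always contains the strictly positive term $v_m = 1$). Consequently $\mathbf{1}^{T}\mathbf{v} = \sum_{d \mid m,\, d \ge 2} v_d > 0$, so $\mathbf{1}^{T}\mathbf{x}' \neq 0$, contradicting $\sum_{j=2}^{n} x_j = 0$; this contradiction proves $x_1 \neq 0$.

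The routine part is the backward substitution and the observation $C(n)\mathbf{x} = x_1(0,1,\ldots,1)^T$. The main obstacle, and the real content of the argument, is the case where $\lambda$ happens to equal a Fibonacci number, i.e.\ an eigenvalue of $D'(n)$: there $\mathbf{x}'$ genuinely survives and must be ruled out quantitatively. The crux is therefore the positivity claim $v_d > 0$, namely that the suitably normalized eigenvector of $D'(n)$ has all entries of one sign, which is precisely what guarantees $\mathbf{1}^{T}\mathbf{v} \neq 0$ and closes the contradiction.
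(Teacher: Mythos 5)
Your proof is correct, and its overall skeleton coincides with the paper's: assume $x_1 = 0$, note that the eigenvalue equation then collapses onto the upper-triangular part (the paper deletes the first column of $F_R(n) - \lambda I_n$; you use the decomposition $F_R(n) = C(n) + D(n)$ together with $C(n)\mathbf{x} = x_1 (0,1,\ldots,1)^T$ --- the same reduction), split according to whether $\lambda \in \{F_2,\ldots,F_n\}$, and settle the critical case $\lambda = F_m$ by a positivity argument over the divisors of $m$. Where you genuinely differ is in how that critical case is closed. The paper works with the row space: it performs row operations on the augmented $n \times (n-1)$ coefficient matrix and concludes its rank is $n-1$ because the resulting pivot $1 - \sum_{i \mid k,\, i \neq 1,k} F_i/(F_i - F_k)$ exceeds $1$. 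You work with the kernel: you compute the one-dimensional eigenspace of the trailing triangular block $D'(n)$ at $\lambda = F_m$ explicitly --- supported on the divisors $d \geq 2$ of $m$, with all entries positive by the recursion $v_d = \frac{F_d}{F_m - F_d} \sum_{d \mid e \mid m,\, e > d} v_e$ --- and observe that the all-ones functional coming from the first row cannot annihilate it. The two arguments are orthogonal-complement duals of the same rank statement, so neither is more general; but your version buys an exact description of the eigenvectors of $D'(n)$, and it is in fact sharper than the paper's: the pivot actually produced by the row operations equals your sum $\mathbf{1}^T \mathbf{v}$, which contains cascaded divisor-chain contributions that the paper's displayed expression omits (already for $k = 8$ the chain $2 \mid 4 \mid 8$ contributes the cross term $\frac{1}{20} \cdot \frac{3}{18}$, so the pivot is $\frac{49}{40}$ rather than the paper's $\frac{73}{60}$). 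Since every such omitted term has the same sign, the paper's inequality and its conclusion are unaffected, but your nested recursion is the computation that is exactly right.
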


\begin{proof}
For the eigenvalue $\lambda$ and the corresponding eigenvector $\mathbf{x} = \left [ x_i \right ]$ of $F_{R}(n)$, consider the equation
\begin{equation}  \label{LS}
   \left ( F_{R}(n) - \lambda \, I_n \right ) \mathbf{x} = \mathbf{0}  ,
\end{equation}
that is,
\[
   \begin{bmatrix}
   1-\lambda & 1 & 1 & 1 & 1 & 1  & 1  & \cdots & 1 \\
    1 & 1-\lambda & 0 & 1 & 0 & 1 & 0  & \cdots & * \\
    1 & 0 & 2-\lambda & 0 & 0 & 2 & 0  & \cdots & * \\
    1 & 0 & 0 & 3-\lambda & 0 & 0 & 0  & \cdots & * \\
    1 & 0 & 0 & 0 & 5-\lambda & 0 & 0  & \cdots & * \\
    1 & 0 & 0 & 0 & 0 & 8-\lambda & 0  & \cdots & * \\
    1 & 0 & 0 & 0 & 0 & 0 & 13-\lambda & \cdots & * \\
    \vdots & \vdots & \vdots & \vdots & \vdots & \vdots & \vdots & \ddots & \vdots \\
    1 & 0 & 0 & 0 & 0 & 0 & 0 & \cdots & F_n - \lambda
   \end{bmatrix}
   \begin{bmatrix}
   x_1 \\ x_2 \\ x_3 \\ x_4 \\ x_5 \\ x_6 \\ x_7 \\ \vdots \\ x_n
   \end{bmatrix}
   =
   \begin{bmatrix}
   0 \\ 0 \\ 0 \\ 0 \\ 0 \\ 0 \\ 0 \\ \vdots \\ 0
   \end{bmatrix}  .
\]
For the shake of contradiction, assume that $x_1 = 0$. Then it follows
\begin{equation}  \label{cut1}
   \begin{bmatrix}
    1 & 1 & 1 & 1 & 1  & 1  & \cdots & 1 \\
    1-\lambda & 0 & 1 & 0 & 1 & 0  & \cdots & * \\
    0 & 2-\lambda & 0 & 0 & 2 & 0  & \cdots & * \\
    0 & 0 & 3-\lambda & 0 & 0 & 0  & \cdots & * \\
    0 & 0 & 0 & 5-\lambda & 0 & 0  & \cdots & * \\
    0 & 0 & 0 & 0 & 8-\lambda & 0  & \cdots & * \\
    0 & 0 & 0 & 0 & 0 & 13-\lambda & \cdots & * \\
    \vdots  & \vdots & \vdots & \vdots & \vdots & \vdots & & \vdots \\
    0 & 0 & 0 & 0 & 0 & 0 & \cdots & F_n - \lambda
   \end{bmatrix}
   \begin{bmatrix}
   x_2 \\ x_3 \\ x_4 \\ x_5 \\ x_6 \\ x_7 \\ \vdots \\ x_n
   \end{bmatrix}
   =
   \begin{bmatrix}
   0 \\ 0 \\ 0 \\ 0 \\ 0 \\ 0 \\ \vdots \\ 0
   \end{bmatrix} .
\end{equation}

We consider the following two cases.

$(a)\;$  Suppose that the eigenvalue $\lambda$ is different than the Fibonacci numbers $F_2 , F_3 , \ldots , F_n$.
Then it is apparent that the rank of the $n \times (n-1)$ coefficient matrix of (\ref{cut1}) is equal to $n-1$.
As a consequence, $x_2 = x_3 = \cdots = x_n = 0 = x_1$, which is a contradiction.

$(b)\;$  Suppose that the eigenvalue $\lambda$ is equal to one of the Fibonacci numbers $F_2 , F_3 , \ldots , F_n$, say $F_k$.
If $\lambda = 1$, then the rank of the $n \times (n-1)$ coefficient matrix of (\ref{cut1}) is equal to $n-1$, and hence, $x_2 = x_3 = \cdots = x_n = 0 = x_1$, which is a contradiction.
If $\lambda > 1$, then by applying appropriate row operations, the first row of the $n \times (n-1)$ coefficient matrix of (\ref{cut1}) can be transformed into the form
\[
     \begin{bmatrix}
     \;\; 0 & 0 & \cdots & 0 & 1 - \hspace{-2mm} \sum\limits_{\tiny{\begin{array}{c} i\mid k \\ i\ne 1,k   \end{array}}} \hspace{-2mm} \dfrac{F_i}{F_i - F_k} & * & * & \cdots & * \;\;
     \end{bmatrix} ,
\]
where
\[
     1 - \hspace{-2mm} \sum\limits_{\tiny{\begin{array}{c} i\mid k \\ i\ne 1,k \end{array}}} \hspace{-2mm} \dfrac{F_i}{F_i - F_k} > 1
\]
is the $(k-1)$-th entry of the row.
Again, it follows that the rank of the $n \times (n-1)$ coefficient matrix of (\ref{cut1}) is equal to $n-1$.
As a consequence, $x_2 = x_3 = \cdots = x_n = 0 = x_1$, which is a contradiction.

Finally, we conclude that $x_1 \ne 0$.              $\hfill$
\end{proof}

\begin{lemma}  \label{geom}
The geometric multiplicity of any eigenvalue of the Fibonacci--Redheffer matrix $F_{R}(n)$ is equal to $1$.
\end{lemma}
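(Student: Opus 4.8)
The plan is to obtain this as a direct consequence of Lemma \ref{firstnonzero}, which already pins down the one structural fact we need: every eigenvector of $F_R(n)$ has a nonzero first entry. Since the geometric multiplicity of an eigenvalue $\lambda$ is the dimension of its eigenspace, namely the null space of $F_R(n) - \lambda I_n$, and this dimension is at least $1$ whenever $\lambda \in \sigma(F_R(n))$, it suffices to rule out dimension $2$ or more. First I would argue by contradiction, assuming that the eigenspace of some eigenvalue $\lambda$ contains two linearly independent eigenvectors $\mathbf{x} = [x_i]$ and $\mathbf{y} = [y_i]$.

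The decisive step is to manufacture, out of $\mathbf{x}$ and $\mathbf{y}$, an eigenvector whose first entry vanishes, which Lemma \ref{firstnonzero} forbids. By that lemma both $x_1$ and $y_1$ are nonzero, so I would set $\mathbf{z} := y_1\, \mathbf{x} - x_1\, \mathbf{y}$. Being a linear combination of two vectors in the null space of $F_R(n) - \lambda I_n$, the vector $\mathbf{z}$ again lies in that null space, while its first entry is $y_1 x_1 - x_1 y_1 = 0$ by construction.

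To finish I would split into the two possibilities for $\mathbf{z}$. If $\mathbf{z} \neq \mathbf{0}$, then $\mathbf{z}$ is a genuine eigenvector of $F_R(n)$ corresponding to $\lambda$ with vanishing first entry, contradicting Lemma \ref{firstnonzero}. Otherwise $\mathbf{z} = \mathbf{0}$, giving $y_1\, \mathbf{x} = x_1\, \mathbf{y}$; since $x_1, y_1 \neq 0$, this makes $\mathbf{x}$ and $\mathbf{y}$ scalar multiples of one another, contradicting their assumed linear independence. In both cases we reach a contradiction, so the eigenspace has dimension exactly $1$ and the geometric multiplicity of every eigenvalue equals $1$. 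I do not anticipate a genuine obstacle: all the real work is already carried by Lemma \ref{firstnonzero}, and the present statement is essentially the observation that a nonvanishing-first-coordinate condition is incompatible with a two-dimensional eigenspace; the only thing to be careful about is to keep both subcases ($\mathbf{z}=\mathbf{0}$ and $\mathbf{z}\neq\mathbf{0}$) explicit so that the argument is airtight.
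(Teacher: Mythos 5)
Your proof is correct, and it is a genuinely clean variant of the paper's argument rather than a copy of it. The paper proceeds by normalization: invoking Lemma \ref{firstnonzero} to set $x_1 = 1$, it rewrites $\left(F_R(n) - \lambda I_n\right)\mathbf{x} = \mathbf{0}$ as an inhomogeneous system in $x_2, \ldots, x_n$ and then re-invokes the rank-$(n-1)$ computation from the \emph{proof} of Lemma \ref{firstnonzero} to conclude that $x_2, \ldots, x_n$ are uniquely determined, so the eigenvector is unique up to scale. You instead use only the \emph{statement} of Lemma \ref{firstnonzero}: given two linearly independent eigenvectors $\mathbf{x}, \mathbf{y}$ for the same $\lambda$, the vector $\mathbf{z} = y_1\mathbf{x} - x_1\mathbf{y}$ lies in the eigenspace and has vanishing first entry, and both alternatives ($\mathbf{z} \neq \mathbf{0}$ and $\mathbf{z} = \mathbf{0}$) are contradictory. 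The two arguments are logically equivalent at bottom — saying that the $n \times (n-1)$ coefficient matrix has full column rank is exactly saying that no nonzero vector in the null space of $F_R(n) - \lambda I_n$ has first coordinate zero — but yours is a purely formal deduction from the lemma as stated, requiring no re-examination of the matrix, and it would work verbatim for any matrix whose eigenvectors all have a nonzero first entry. What the paper's version buys in exchange is explicit information: it exhibits $x_2, \ldots, x_n$ as uniquely determined by $x_1$, which is the computational fact the paper reuses later, e.g.\ in the formulas (\ref{xk}) and in building the function $Q_{F_{R}(n)}$.
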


\begin{proof}
If we assume that the matrix $F_{R}(n)$ has two linearly independent eigenvectors corresponding to the same eigenvalue,
then some linear combination of them will have its first entry equal to $0$; this is a contradiction.       $\hfill$
\end{proof}

Stimulating communication with Professor Sergey Savchenko, on rank one perturbations, led to the following lemma;
its proof is a simple exercise presented here for clarity.

\begin{lemma}  \label{sav}
Consider an $n\times n$ matrix $B$ and the $n\times n$ rank one perturbation matrix
\[
  M(n) = \mathbf{u} \cdot \mathbf{v}^{T} \left ( = \mathbf{u} \, \mathbf{v}^{*} \right) =
  \begin{bmatrix}
  1 \\
  1 \\
  1 \\
  \vdots \\
  1
  \end{bmatrix}
  \begin{bmatrix}
  1 & 0 & 0 & \cdots & 0
  \end{bmatrix}
  =
  \begin{bmatrix}
  1 & 0 & 0 & \cdots & 0 \\
  1 & 0 & 0 & \cdots & 0 \\
  1 & 0 & 0 & \cdots & 0 \\
  \vdots & \vdots & \vdots & \ddots & \vdots \\
  1 & 0 & 0 & \cdots & 0
  \end{bmatrix} .
\]
Let $\lambda$ be an eigenvalue of $B$ of geometric multiplicity $1$, with corresponding left eigenvector $\mathbf{y}$,
and let $\hat{\lambda}$ be an eigenvalue of $B \pm M$ of geometric multiplicity $1$, with corresponding eigenvector $\hat{\mathbf{x}}$.
If $\mathbf{y}^* \mathbf{u} \ne 0$ and $\mathbf{v}^* \hat{\mathbf{x}} \ne 0$, then $\lambda \ne \hat{\lambda}$.
\end{lemma}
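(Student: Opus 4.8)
The plan is to argue by contradiction: assume $\lambda = \hat{\lambda}$ and derive a violation of one of the two nonvanishing hypotheses. The whole argument is a single pairing of a left eigenvector of $B$ against a right eigenvector of $B \pm M$, arranged so that the common part $B$ cancels and only the rank-one perturbation $M$ is left to be analyzed.

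First I would record the two relations I intend to combine. Since $\mathbf{y}$ is a left eigenvector of $B$ associated with $\lambda$, it satisfies $\mathbf{y}^{*} B = \lambda \, \mathbf{y}^{*}$; and since $\hat{\mathbf{x}}$ is a (right) eigenvector of $B \pm M$ associated with $\hat{\lambda}$, it satisfies $(B \pm M)\,\hat{\mathbf{x}} = \hat{\lambda}\, \hat{\mathbf{x}}$. The idea is to left-multiply the second relation by $\mathbf{y}^{*}$, so that the $B$-contribution is governed entirely by the left-eigenvector relation and cancels, leaving only the rank-one term.

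Carrying this out, $\mathbf{y}^{*}(B \pm M)\,\hat{\mathbf{x}} = \hat{\lambda}\,\mathbf{y}^{*}\hat{\mathbf{x}}$. On the left, $\mathbf{y}^{*} B \hat{\mathbf{x}} = \lambda \, \mathbf{y}^{*}\hat{\mathbf{x}}$ by the left-eigenvector relation, so under the contradiction hypothesis $\hat{\lambda} = \lambda$ the matching terms $\lambda\,\mathbf{y}^{*}\hat{\mathbf{x}}$ on both sides cancel, leaving $\pm\,\mathbf{y}^{*} M \hat{\mathbf{x}} = 0$, i.e. $\mathbf{y}^{*} M \hat{\mathbf{x}} = 0$. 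The final step exploits the rank-one factorization $M = \mathbf{u}\,\mathbf{v}^{*}$: it splits the scalar as $\mathbf{y}^{*} M \hat{\mathbf{x}} = (\mathbf{y}^{*}\mathbf{u})(\mathbf{v}^{*}\hat{\mathbf{x}})$, a product of two numbers. Since $\mathbf{y}^{*}\mathbf{u} \ne 0$ and $\mathbf{v}^{*}\hat{\mathbf{x}} \ne 0$ by hypothesis, this product is nonzero, a contradiction; hence $\lambda \ne \hat{\lambda}$.

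I expect no serious obstacle, as the computation is short; the one point that needs care is the convention for the left eigenvector, so that the identity $\mathbf{y}^{*} B = \lambda \, \mathbf{y}^{*}$ holds exactly (with $\lambda$ and not its conjugate) and the $B$-term cancels cleanly against the right-hand side. I would also remark that the geometric-multiplicity-one assumptions are not actually used in the contradiction itself; they serve only to determine $\mathbf{y}$ and $\hat{\mathbf{x}}$ up to scalar multiples, which is what makes the two nonvanishing conditions well posed in the intended application to $F_{R}(n)$.
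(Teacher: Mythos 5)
Your proof is correct and follows essentially the same route as the paper: both pair the left eigenvector relation $\mathbf{y}^{*}B = \lambda\,\mathbf{y}^{*}$ against $(B \pm \mathbf{u}\,\mathbf{v}^{*})\hat{\mathbf{x}} = \hat{\lambda}\,\hat{\mathbf{x}}$ and factor the resulting scalar as $(\mathbf{y}^{*}\mathbf{u})(\mathbf{v}^{*}\hat{\mathbf{x}})$. The only difference is organizational: by assuming $\lambda = \hat{\lambda}$ up front, you cancel the terms $\lambda\,\mathbf{y}^{*}\hat{\mathbf{x}}$ outright and avoid the paper's case split on whether $\mathbf{y}^{*}\hat{\mathbf{x}} = 0$, which is a slightly cleaner way to finish the same computation.
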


\begin{proof}
Since the geometric multiplicities of $\lambda$ and $\hat{\lambda}$ are equal to $1$, the (nonzero) vectors $\mathbf{y}$ and $\hat{\mathbf{x}}$ are unique up to scalar multiple.
They also satisfy $\mathbf{y}^* B = \lambda\, \mathbf{y}^*$ and $\left ( B \pm \mathbf{u} \, \mathbf{v}^* \right ) \hat{\mathbf{x}} = \hat{\lambda}\, \hat{\mathbf{x}}$.

Suppose that $\mathbf{y}^* \mathbf{u} \ne 0$ and $\mathbf{v}^* \hat{\mathbf{x}} \ne 0$.
Taking the inner product of both sides of the equation
\[
         B\, \hat{\mathbf{x}} = \hat{\lambda}\, \hat{\mathbf{x}} \mp \mathbf{u} \, \mathbf{v}^* \hat{\mathbf{x}}
\]
with the left eigenvector $\mathbf{y}$ yields
\[
     \mathbf{y}^* B\, \hat{\mathbf{x}} = \mathbf{y}^* \hat{\lambda}\, \hat{\mathbf{x}} \mp \mathbf{y}^* \mathbf{u} \, \mathbf{v}^* \hat{\mathbf{x}} ,
\]
or equivalently,
\[
     \lambda \left(\mathbf{y}^*\hat{\mathbf{x}}\right) = \hat{\lambda}\left ( \mathbf{y}^*  \hat{\mathbf{x}} \right ) \mp \left ( \mathbf{y}^* \mathbf{u} \right ) \left ( \mathbf{v}^* \hat{\mathbf{x}} \right ) .
\]
If $\mathbf{y}^*\hat{\mathbf{x}} \ne 0$, then it is apparent that
\[
     \lambda = \hat{\lambda} \mp \frac{\left ( \mathbf{y}^* \mathbf{u} \right ) \left ( \mathbf{v}^* \hat{\mathbf{x}} \right ) }{\mathbf{y}^*\hat{\mathbf{x}}} \ne \hat{\lambda} .
\]
If $\mathbf{y}^*\hat{\mathbf{x}} = 0$, then it is apparent that $\mathbf{y}^* \mathbf{u} = 0$ or $\mathbf{v}^* \hat{\mathbf{x}} = 0$, which is a contradiction.      $\hfill$
\end{proof}

For the $n \times n$ Fibonacci--Redheffer matrix $F_R (n)$, consider
the $n \times n$ singular upper triangular matrix
\[
  T(n) =
  \begin{bmatrix}
  0 & 1 & 1 & 1 & 1 & 1 & \cdots & 1 \\
  0 & 1 & 0 & 1 & 0 & 1 & \cdots & * \\
  0 & 0 & 2 & 0 & 0 & 2 & \cdots & * \\
  0 & 0 & 0 & 3 & 0 & 0 & \cdots & * \\
  0 & 0 & 0 & 0 & 5 & 0 & \cdots & * \\
  0 & 0 & 0 & 0 & 0 & 8 & \cdots & * \\
  \vdots & \vdots & \vdots & \vdots & \vdots & \vdots & \ddots & \vdots \\
  0 & 0 & 0 & 0 & 0 & 0 & \cdots & F_n
  \end{bmatrix}
\]
and the $n \times n$ rank one matrix $M(n)$ in Lemma \ref{sav}.
Of course, all the eigenvalues of $T(n)$ are simple, the nonzero eigenvalues of $T(n)$ are the Fibonacci numbers $F_2 , F_3 , \dots , F_n$, and
$F_R (n) = T (n)  +  M (n)$ (or equivalently, $T (n) =  F_R (n) -  M (n)$). Moreover, we observe the following.

\begin{remark}  \label{TT} \em
Consider the vectors $\mathbf{u}$ and $\mathbf{v}$ and the $n \times n$ matrix $T(n) = F_R (n) - \mathbf{u} \cdot \mathbf{v}^{T}$ above.
Following exactly the arguments of the proof of Lemma \ref{firstnonzero}, one can obtain that for any eigenvector $\hat{\mathbf{x}}$ of $T(n)$,
its first entry is nonzero, or equivalently, $\mathbf{v}^* \hat{\mathbf{x}} \ne 0$.
\end{remark}

If $\mathbf{y} = \left [ y_i \right ] = \begin{bmatrix} y_1 & y_2 & \cdots & y_n \end{bmatrix}^T$ is a left eigenvector of the matrix $F_R (n)$ corresponding to a real eigenvalue $\lambda$ of $F_R (n)$,
then it is easy to see that
\begin{equation}
       \sum_{i=1}^{n} y_{i} = \lambda\, y_{1} .        \label{1CONT}
\end{equation}
Since $0$ cannot be an eigenvalue of $F_R (n)$, (\ref{1CONT}) yields
\begin{equation}
    \sum_{i=1}^{n} y_{i} \neq 0 \quad \text{if and only if} \quad y_{1} \neq 0.      \label{CONT}
\end{equation}
We are now ready to apply Lemmas \ref{firstnonzero} and \ref{geom} and Remark \ref{TT}, in order to prove the following.

\begin{theorem}  \label{different}
Let $\sigma (F_R(n))$ be the spectrum of the matrix $F_R (n)$.
Then, for any positive integer $n \geq 2$, it holds
\[
       F_{j} \notin \sigma (F_R(n)) ,  \quad j=1,2,\ldots, n.
\]
In words, none of the first $n$ Fibonacci numbers is an eigenvalue of $F_R (n)$.
\end{theorem}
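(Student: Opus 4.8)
The plan is to argue by contradiction through the rank one perturbation Lemma \ref{sav}, exploiting the decomposition $F_R(n) = T(n) + M(n)$ with $M(n) = \mathbf{u}\,\mathbf{v}^{T}$. Since $F_1 = F_2 = 1$, it suffices to exclude $F_j$ for $j = 2, 3, \ldots, n$. For each such $j$, the value $F_j$ is a diagonal entry of the upper triangular matrix $T(n)$, whose diagonal $0, F_2, \ldots, F_n$ consists of pairwise distinct numbers; hence $F_j$ is a simple eigenvalue of $T(n)$, of geometric multiplicity $1$, and I denote by $\mathbf{y}$ a corresponding left eigenvector. Because the first column of $T(n)$ vanishes, the left eigenvector relation forces the first component of $\mathbf{y}$ to be zero, and a short induction on the coordinatewise relations shows that the support of $\mathbf{y}$ is contained in the multiples of $j$, with leading entry $y_j \neq 0$ (which I normalize to $y_j = 1$).

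First I would run Lemma \ref{sav} with $B = T(n)$ and $B + M(n) = F_R(n)$, taking $\lambda = F_j$ (the eigenvalue of $T(n)$ with left eigenvector $\mathbf{y}$) and letting $\hat\lambda$ range over the eigenvalues of $F_R(n)$, with corresponding right eigenvectors $\hat{\mathbf{x}}$. By Lemma \ref{geom} each such $\hat\lambda$ has geometric multiplicity $1$, and by Lemma \ref{firstnonzero} the first entry of $\hat{\mathbf{x}}$ is nonzero, i.e. $\mathbf{v}^{*}\hat{\mathbf{x}} \neq 0$. Provided the remaining hypothesis $\mathbf{y}^{*}\mathbf{u} = \sum_{i} y_i \neq 0$ holds, Lemma \ref{sav} yields $F_j = \lambda \neq \hat\lambda$ for every eigenvalue $\hat\lambda$ of $F_R(n)$, which is exactly the assertion $F_j \notin \sigma(F_R(n))$. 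Equivalently, from $\mathbf{y}^{T}(F_R(n) - F_j I) = (\sum_{i} y_i)\,\mathbf{v}^{T}$, right-multiplying by a hypothetical right eigenvector $\hat{\mathbf{x}}$ of $F_R(n)$ at $F_j$ gives $0 = (\sum_{i} y_i)\,\hat{x}_1$, so that $\hat{x}_1 \neq 0$ forces $\sum_{i} y_i = 0$; thus the whole theorem is in fact \emph{equivalent} to the single inequality $\sum_{i} y_i \neq 0$.

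The crux, and the step I expect to be the main obstacle, is therefore to prove that the coordinate sum $\sum_{i} y_i$ of the left eigenvector of $T(n)$ at $F_j$ is nonzero. Writing $y_{rj}$ for the candidate nonzero coordinates and reading off $\mathbf{y}^{T} T(n) = F_j \mathbf{y}^{T}$ coordinatewise gives the recursion $(F_j - F_{rj})\,y_{rj} = \sum_{s \mid r,\, s < r} F_{sj}\, y_{sj}$ for $r \geq 2$. Here the rapid growth of the Fibonacci numbers is decisive: since $F_{rj} - F_j$ grows geometrically (one may invoke $F_m \geq \phi^{m-2}$ from (\ref{phi})), each tail coordinate $y_{rj}$ with $r \geq 2$ is small, and an inductive geometric estimate should yield $\sum_{r \geq 2} |y_{rj}| < 1 = |y_j|$. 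This domination of the leading coordinate forces $\bigl|\sum_{i} y_i\bigr| \geq |y_j| - \sum_{r \geq 2}|y_{rj}| > 0$. I expect the bookkeeping in this estimate (uniformity in $n$, and verifying that $j = 2$ is the extremal case) to be the only genuinely technical point; the worst case $j = 2$ already gives $\sum_{r \geq 2}|y_{rj}| \approx 0.70$, comfortably below $1$.

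Once $\sum_{i} y_i \neq 0$ is in hand, every $F_j$ with $2 \leq j \leq n$ is excluded, and since $F_1 = F_2$ the statement holds for all $j = 1, 2, \ldots, n$. As an alternative to the $B = T(n)$ bookkeeping, one may instead take $B = F_R(n)$ and $B - M(n) = T(n)$: then Remark \ref{TT} supplies $\mathbf{v}^{*}\hat{\mathbf{x}} \neq 0$ for the right eigenvector $\hat{\mathbf{x}}$ of $T(n)$, while by (\ref{CONT}) the condition $\mathbf{y}^{*}\mathbf{u} \neq 0$ on a left eigenvector $\mathbf{y}$ of $F_R(n)$ reduces to $y_1 \neq 0$. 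This route, however, still rests on the same Fibonacci estimate, since a hypothetical left eigenvector of $F_R(n)$ at $F_j$ would necessarily satisfy $y_1 = 0$; so there is no avoiding the growth-based domination argument, which is the genuine content of the theorem.
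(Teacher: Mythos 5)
Your reduction is correct, and it is essentially the paper's own argument viewed in a mirror. The paper applies Lemma \ref{sav} to the same splitting $F_R(n) = T(n) + \mathbf{u}\,\mathbf{v}^T$, but with $B = F_R(n)$ and $B - M = T(n)$ (your ``alternative route''): Remark \ref{TT} supplies $\mathbf{v}^*\hat{\mathbf{x}} \neq 0$ for eigenvectors of $T(n)$, and the entire burden falls on showing that every left eigenvector of $F_R(n)$ has nonzero first entry, which the paper does by a rank/row-reduction argument on $F_R(n)^T - \lambda I_n$ with a case analysis on whether $\lambda$ is a Fibonacci number; the decisive pivot in the case $\lambda = F_k$ is $1 - \sum_{k \mid i,\, i \neq k} F_k/(F_i - F_k) > 1 - \left(\tfrac12 + \tfrac14 + \cdots\right) > 0$. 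Your observation that the theorem is equivalent to the single inequality $\sum_i y_i \neq 0$ for the left eigenvector $\mathbf{y}$ of $T(n)$ at $F_j$ is correct, and it identifies exactly what the paper's case (b) proves in disguise: a left eigenvector of $F_R(n)$ at $F_k$ with vanishing first entry is precisely a left eigenvector of $T(n)$ at $F_k$ with vanishing coordinate sum. So the two routes share the same technical core; your support computation and the recursion $(F_j - F_{rj})\,y_{rj} = \sum_{s \mid r,\, s < r} F_{sj}\, y_{sj}$ are also correct.

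The gap is that this core --- the domination estimate $\sum_{r\ge 2}|y_{rj}| < 1$ --- is exactly what you leave at the level of ``an inductive geometric estimate should yield,'' and it is less routine than that phrase suggests, because your recursion carries a convolution over divisors that the paper's pivot quantity does not. The natural inductive hypothesis $|y_{rj}| \le 2^{-(r-1)}$ (the analogue of the paper's geometric bound) in fact fails: at $j=2$, $r=6$ the induction step would require $F_2\, 2^5 + F_4\, 2^4 + F_6\, 2^3 \le F_{12} - F_2$, i.e.\ $144 \le 143$, which is false; hypotheses of the form $|y_{rj}| \le c\,\rho^{\,r}$ or $|y_{rj}| \le c\, F_j / F_{rj}$ break in the same way at divisor-rich $r$. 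The estimate is nonetheless true and your plan is repairable, for instance by aggregating rather than inducting: summing the recursion over all $r \ge 2$ and exchanging the order of summation gives $S := \sum_{r \ge 2} |y_{rj}| \le c_1 + c\,S$, where $c_1 = \sum_{m \ge 2} F_j/(F_{mj} - F_j)$ and $c = \max_{s \ge 2} F_{sj} \sum_{m \ge 2} 1/(F_{msj} - F_j)$; in the extremal case $j = 2$ one computes $c_1 \approx 0.72$ and $c \approx 0.18$ (the maximum occurring at $s=2$), whence $S \le c_1/(1-c) < 1$, uniformly in $n$, and both constants decrease in $j$. As written, however, your proof stops exactly where the real work begins, so the decisive inequality of your argument remains unproven.
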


\begin{proof}
Consider a real eigenvalue $\lambda \in \sigma (F_R(n))$ and a corresponding left eigenvector $\mathbf{y} = \left [ y_i \right ]$.
By Lemma \ref{geom}, the geometric multiplicity of the eigenvalue $\lambda$ is equal to $1$.
Hence, by Lemma \ref{sav}, Remark \ref{TT}, relation (\ref{CONT}), and the above discussion, it suffices to prove that the first entry $y_1$ of the left eigenvector $\mathbf{y}$ is nonzero.

Consider the equation
\[
     \left(F_{R}(n)^T - \lambda\, I_n \right) \mathbf{y} = \mathbf{0} ,
\]
that is,
\[
     \begin{bmatrix}
     1-\lambda & 1 & 1 & 1 & 1 & 1 & 1  & \cdots & 1 \\
     1 & 1-\lambda & 0 & 0 & 0 & 0 & 0  & \cdots & 0 \\
     1 & 0 & 2-\lambda & 0 & 0 & 0 & 0  & \cdots & 0 \\
     1 & 1 & 0 & 3-\lambda & 0 & 0 & 0  & \cdots & 0 \\
     1 & 0 & 0 & 0 & 5-\lambda & 0 & 0  & \cdots & 0 \\
     1 & 1 & 2 & 0 & 0 & 8-\lambda & 0  & \cdots & 0 \\
     1 & 0 & 0 & 0 & 0 & 0 & 13-\lambda & \cdots & 0 \\
     \vdots & \vdots & \vdots & \vdots & \vdots & \vdots & \vdots & \ddots & \vdots \\
     1 & * & * & * & * & * & * & \cdots & F_n - \lambda
     \end{bmatrix}
     \begin{bmatrix}
     y_1 \\ y_2 \\ y_3 \\ y_4 \\ y_5 \\ y_6 \\ y_7 \\ \vdots \\ y_n
     \end{bmatrix}
     =
     \begin{bmatrix}
     0 \\ 0 \\ 0 \\ 0 \\ 0 \\ 0 \\ 0 \\ \vdots \\ 0
     \end{bmatrix} .
\]
For the shake of contradiction, assume that $y_1 = 0$. Then, it follows
\begin{equation}  \label{cut3}
     \begin{bmatrix}
     1 & 1 & 1 & 1 & 1 & 1  & \cdots & 1 \\
     1-\lambda & 0 & 0 & 0 & 0 & 0  & \cdots & 0 \\
     0 & 2-\lambda & 0 & 0 & 0 & 0  & \cdots & 0 \\
     1 & 0 & 3-\lambda & 0 & 0 & 0  & \cdots & 0 \\
     0 & 0 & 0 & 5-\lambda & 0 & 0  & \cdots & 0 \\
     1 & 2 & 0 & 0 & 8-\lambda & 0  & \cdots & 0 \\
     0 & 0 & 0 & 0 & 0 & 13-\lambda & \cdots & 0 \\
     \vdots & \vdots & \vdots & \vdots & \vdots & \vdots &  & \vdots \\
     * & * & * & * & * & * & \cdots & F_n - \lambda
     \end{bmatrix}
     \begin{bmatrix}
     y_2 \\ y_3 \\ y_4 \\ y_5 \\ y_6 \\ y_7 \\ \vdots \\ y_n
     \end{bmatrix}
     =
     \begin{bmatrix}
     0 \\ 0 \\ 0 \\ 0 \\ 0 \\ 0 \\ \vdots \\ 0
     \end{bmatrix}  .
\end{equation}

We consider the following two cases.

$(a)\;$  Suppose that the eigenvalue $\lambda$ is different than the Fibonacci numbers $F_2 , F_3 , \ldots , F_n$.
Then it is apparent that the rank of the $n \times (n-1)$ coefficient matrix of (\ref{cut3}) is equal to $n-1$.
As a consequence, $y_2 = y_3 = \cdots = y_n = 0 = y_1$, which is a contradiction.

$(b)\;$  Suppose that the eigenvalue $\lambda$ is equal to one of the Fibonacci numbers $F_2 , F_3 , \ldots , F_n$, say $F_k$.
If $\lambda = F_n$, then the rank of the $n \times (n-1)$ coefficient matrix of (\ref{cut3}) is equal to $n-1$, and hence, $y_2 = y_3 = \cdots = y_n = 0 = y_1$, which is a contradiction.
If $\lambda < F_n$, then by applying appropriate row operations, the first row of the $n \times (n-1)$ coefficient matrix of (\ref{cut3}) can be transformed into the form
\[
     \begin{bmatrix}
     \;\; * & * & \cdots & * & 1 - \hspace{-2mm} \sum\limits_{\tiny{\begin{array}{c} k\mid i \\ i\ne k   \end{array}}} \hspace{-2mm} \dfrac{F_k}{F_i - F_k} & 0 & 0 & \cdots & 0 \;\;
     \end{bmatrix} ,
\]
where
\begin{eqnarray*}
           1 -  \sum\limits_{\tiny{\begin{array}{c} k\mid i \\ i\ne k \end{array}}} \dfrac{F_k}{F_i - F_k}
      &=&  1 - \left ( \dfrac{F_{k}}{F_{2k} - F_k} + \dfrac{F_{k}}{F_{3k} - F_k} + \dfrac{F_{k}}{F_{4k} - F_k} + \cdots \text{ ``finite number of terms''} \right )   \\
      &>&  1 - \left ( \dfrac{1}{2} + \dfrac{1}{4} + \dfrac{1}{8} + \cdots \text{ ``finite number of terms''}  \right ) > 0
\end{eqnarray*}
is the $(k-1)$-th entry of the row.
Again, it follows that the rank of the $n \times (n-1)$ coefficient matrix of (\ref{cut3}) is equal to $n-1$.
As a consequence, $y_2 = y_3 = \cdots = y_n = 0 = y_1$, which is a contradiction.

Finally, we conclude that $y_1 \ne 0$.              $\hfill$
\end{proof}

\begin{corollary} \label{nonzeroentries}
Let $\lambda$ be an eigenvalue of the Fibonacci--Redheffer matrix $F_{R}(n)$, with a corresponding eigenvector $\mathbf{x} = \left [ x_i \right ]$.
Then, all entries $x_1 , x_2 , \ldots , x_n$ of the eigenvector $\mathbf{x}$ are nonzero.
\end{corollary}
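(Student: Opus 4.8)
The plan is to pass to the componentwise form of the eigen-equation and lean on Theorem~\ref{different}. By Lemma~\ref{firstnonzero} we already know $x_1\neq 0$, and by Lemma~\ref{geom} the eigenvector is unique up to scale, so after normalizing $x_1=1$ it remains to show $x_i\neq 0$ for $i\geq 2$. Writing out the $i$-th row of $(F_R(n)-\lambda I_n)\mathbf{x}=\mathbf{0}$ gives, for every $i\geq 2$,
\[
   (\lambda-F_i)\,x_i \;=\; x_1 + F_i\sum_{\substack{i\mid j\\ i<j\leq n}} x_j ,
\]
and Theorem~\ref{different} guarantees $\lambda-F_i\neq 0$. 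Hence the equations indexed by $i=2,\dots,n$ form an upper-triangular, nonsingular recurrence that determines $x_2,\dots,x_n$ from $x_1$ by back-substitution starting at $i=n$.

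First I would dispose of the ``upper half''. For every $i$ with $2i>n$ the sum above is empty, so $x_i=x_1/(\lambda-F_i)\neq 0$; in particular $x_i\neq 0$ for all $i>n/2$. For the remaining indices I would argue by contradiction: let $m$ be the largest index with $x_m=0$. Then necessarily $m\leq n/2$ (otherwise the empty-sum formula already forces $x_m\neq 0$), and $x_j\neq 0$ for every $j>m$. The $m$-th equation now collapses to
\[
   \sum_{\substack{m\mid j\\ m<j\leq n}} x_j \;=\; -\,\frac{x_1}{F_m}\;\neq 0 ,
\]
a nontrivial linear relation among the (already-known-to-be-nonzero) higher entries.

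The delicate point is that $x_m$, viewed as a rational function of $\lambda$, vanishes exactly at the roots of its numerator polynomial $N_m$, and these roots need \emph{not} be Fibonacci numbers: for $n=6$ and $m=3$ one gets the factor $\lambda-6$, yet $6$ is not a Fibonacci number, so Theorem~\ref{different} by itself cannot exclude it. To close the gap I would bring in the \emph{global} fact that $\lambda$ is a genuine eigenvalue, i.e.\ the first-row equation $\sum_{i=1}^{n}x_i=\lambda x_1$, together with the M\"obius identity obtained by summing the recurrence against $\mu(i)/F_i$ (using $\sum_{i\mid j}\mu(i)=0$ for $j\geq 2$),
\[
   \lambda\sum_{i=1}^{n}\frac{\mu(i)\,x_i}{F_i} \;=\; \sum_{i=1}^{n}\frac{\mu(i)}{F_i}\;\neq 0 ,
\]
whose right-hand side is nonzero for $n\geq 3$ by Theorems~\ref{firsttheorem} and~\ref{secondtheorem}. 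The main obstacle is precisely this coupling: one must show that the back-substitution formulas, the relation at row $m$, and the characteristic (first-row) equation cannot hold simultaneously unless $x_m\neq 0$. I expect this incompatibility, rather than any single local computation, to be the crux, since the local recurrence and Theorem~\ref{different} alone leave the non-Fibonacci zeros of $N_m$ uncontrolled.
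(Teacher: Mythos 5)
Your proposal establishes the easy half of the statement and stops exactly where the real work begins. The part you do prove --- $x_i = x_1/(\lambda - F_i) \ne 0$ for every $i > n/2$, via the empty-sum rows and Theorem~\ref{different} --- is correct and coincides with the paper's relation (\ref{xk}). Your reduction to a largest index $m \le n/2$ with $x_m = 0$, and the resulting relation $\sum_{m \mid j,\, m < j \le n} x_j = -x_1/F_m \ne 0$, are also correct, and your observation that the numerator of $x_m$ (as a rational function of $\lambda$) can vanish at non-Fibonacci points is sharp and genuine: indeed, for $n=6$, $m=3$ one has $x_3 = x_1(\lambda-6)/\left((\lambda-2)(\lambda-8)\right)$, and $\lambda = 6$ is not excluded by Theorem~\ref{different}. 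Your M\"obius identity $\lambda \sum_{i=1}^{n} \mu(i) x_i / F_i = \sum_{i=1}^{n} \mu(i)/F_i$ is also correctly derived, and its right-hand side is nonzero for $n \ge 3$ by Theorem~\ref{secondtheorem}. But having isolated the crux, you do not resolve it: you give no argument for why the back-substitution formulas, the row-$m$ relation, and the first-row equation are incompatible with $x_m = 0$; you only state that you ``expect'' this incompatibility. That expected incompatibility \emph{is} the content of the corollary for $2 \le m \le \lfloor n/2 \rfloor$, so the proposal has a genuine gap rather than a complete proof.

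For comparison, the paper closes this case by a rank argument rather than by controlling the zeros of the numerators. Assuming $x_k = 0$, it deletes the $k$-th column from a row-transformed copy of $F_R(n) - \lambda\, I_n$ (the $n$-th row is subtracted from the others and moved to the top), and then argues --- using that every entry $F_i - \lambda$ is nonzero by Theorem~\ref{different}, together with column operations in the spirit of Lemma~\ref{firstnonzero} and Theorem~\ref{different} --- that the resulting $n \times (n-1)$ matrix $B_k$ has full column rank $n-1$; this forces $\mathbf{x} = \mathbf{0}$, contradicting $x_1 \ne 0$. The paper also sketches a second proof by exactly your back-substitution scheme, and that sketch is likewise brief at precisely the point you flag (it asserts $x_\omega \ne 0$ with little justification), so your diagnosis of where the difficulty lives is accurate; the difference is that the paper commits to a concrete mechanism for the contradiction, whereas your proposal names plausible ingredients but supplies no argument that they deliver it.
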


\begin{proof}
By Lemma \ref{firstnonzero}, $x_1 \ne 0$.
Consider the equation (\ref{LS}).
Substracting the $n$-th row of the $n \times n$ coefficient matrix $F_{R}(n) - \lambda\, I_n$ from each of the rest $n-1$ rows of the matrix and moving it to the top yields
\[
   \begin{bmatrix}
    1 & 0 & 0 & 0 & 0 & 0 & 0 & \cdots & 0 & F_n - \lambda \\
    0 & 1 & 1 & 1 & 1 & 1  & 1  & \cdots & 1 & 1 - (F_n - \lambda) \\
    0 & 1-\lambda & 0 & 1 & 0 & 1 & 0  & \cdots & * & * \\
    0 & 0 & 2-\lambda & 0 & 0 & 2 & 0  & \cdots & * & * \\
    0 & 0 & 0 & 3-\lambda & 0 & 0 & 0  & \cdots & * & * \\
    0 & 0 & 0 & 0 & 5-\lambda & 0 & 0  & \cdots & * & * \\
    0 & 0 & 0 & 0 & 0 & 8-\lambda & 0  & \cdots & * & * \\
    0 & 0 & 0 & 0 & 0 & 0 & 13-\lambda & \cdots & * & * \\
    \vdots & \vdots & \vdots & \vdots & \vdots & \vdots & \vdots & \ddots & \vdots & \vdots \\
    0 & 0 & 0 & 0 & 0 & 0 & 0 & \cdots & F_{n-1} - \lambda & - (F_n - \lambda)  .
   \end{bmatrix}
\]
By Theorem \ref{different}, the entries $1-\lambda , 2-\lambda, \ldots , F_{n-1} - \lambda , F_n - \lambda$ are nonzero.
For the shake of contradiction, assume that $x_k = 0$ for some $k \in \{ 2,3,\ldots,n \}$ and remove the corresponding column from the above matrix, resulting to an $n \times (n-1)$ matrix $B_k$.
If $k \in \{ n-1 , n \}$, then it is obvious that the rank of $B_k$ is $n-1$, and hence, $x_1 = x_2 = \cdots = x_n =0$, which is a contradiction.
If $k \in \{ 2,3,\ldots,n-2 \}$, then by applying appropriate column operations and following arguments similar to those of the proofs of Lemma \ref{firstnonzero} and Theorem \ref{different},
the last column of $B_k$ can be transformed to a column with its $k$-th entry nonzero and its $(k+1)$-th, $(k+2)$-th, $\ldots,$ $(n-1)$-th and $n$-th entries equal to $0$.
As a consequence, it follows again that the rank of the $n \times (n-1)$ matrix $B_k$ is $n-1$, and thus, $x_1 = x_2 = \cdots = x_n =0$, which is a contradiction.        $\hfill$
\end{proof}

The main pillars of our analysis, until now, are the following:
\begin{description}
 \item[(i)  ]  The determinant of $F_{R}(n)$ is negative, and thus, $F_{R}(n)$ is nonsingular and its eigenvalues are nonzero (Theorems \ref{firsttheorem} and \ref{secondtheorem}).
 \item[(ii) ]  All eigenvalues of $F_{R}(n)$ are different than the first $n$ Fibonacci numbers (Theorem \ref{different}).
 \item[(iii)]  All eigenvalues of $F_{R}(n)$ have geometric multiplicity $1$ (Lemma \ref{geom}).
 \item[(iv) ]  For any eigenvector of $F_{R}(n)$, all its entries are nonzero (Lemma \ref{firstnonzero} and Corollary \ref{nonzeroentries}).
\end{description}
Next, we will obtain that all eigenvalues of $F_{R}(n)$ are real and simple (i.e., of algebraic multiplicity $1$).
We start by sketching a second proof of Corollary \ref{nonzeroentries}.

Recall the equation (\ref{LS}), that is,
\[
    \left(F_{R}(n)-\lambda\, I_n \right)\mathbf{x} = \mathbf{0} ,
\]
where $\mathbf{x} = \left [ x_i \right ] = \begin{bmatrix} x_1 & x_2 & \cdots & x_n \end{bmatrix}^T$ is an eigenvector of $F_{R}(n)$ corresponding to an eigenvalue $\lambda \in \sigma (F_{R}(n))$.
Consider the $n$ equations of this system and set
\begin{equation}
   \omega: =\left\lfloor \frac{n}{2} \right\rfloor=
   \begin{cases}
   \dfrac{n}{2}   &  \text{if}\;\; n \;\; \text{is even}, \\[2mm]
   \dfrac{n-1}{2} &  \text{if}\;\; n \;\; \text{is odd}. \label{omega}
   \end{cases}
\end{equation}
>From the $(\omega+1)$-th, $(\omega+2)$-th, $\ldots,$ $(n-1)$-th and $n$-th equations, we can see that
\begin{equation}
     x_{k} = \frac{1}{\lambda-F_{k}}  \,  x_1  ,     \quad k = \omega+1, \omega+2, \ldots, n  ,     \label{xk}
\end{equation}
for any eigenvector $\mathbf{x} = \left [ x_i \right ]$, verifying that the last $n-\omega$ entries of $\mathbf{x}$ are nonzero, as well as the first entry.
For a concrete instance, and motivated by Remark \ref{firstremark} where $n=8$ and $\omega=4$, we have
\[
  \left ( F_{R}(8)-\lambda I_8 \right ) \mathbf{x} =
  \left[
  \begin{array}{cccc:cccc}
  1-\lambda & 1 & 1 & 1 & 1 & 1 & 1 & 1 \\
  1 & 1-\lambda & 0 & 1 & 0 & 1 & 0 & 1 \\
  1 & 0 & 2-\lambda & 0 & 0 & 2 & 0 & 0 \\
  1 & 0 & 0 & 3-\lambda & 0 & 0 & 0 & 3 \\ \hdashline
  1 & 0 & 0 & 0 & 5-\lambda & 0 & 0 & 0 \\
  1 & 0 & 0 & 0 & 0 & 8-\lambda & 0 & 0 \\
  1 & 0 & 0 & 0 & 0 & 0 & 13-\lambda & 0 \\
  1 & 0 & 0 & 0 & 0 & 0 & 0 & 21-\lambda \\
  \end{array}
  \right]
  \begin{bmatrix}
  x_1 \\ x_2 \\ x_3 \\ x_4 \\ x_5 \\ x_6 \\ x_7 \\ x_8
  \end{bmatrix}
  =
  \begin{bmatrix}
  0 \\ 0 \\ 0 \\ 0 \\ 0 \\ 0 \\ 0 \\ 0
  \end{bmatrix}
\]
and the last four equations imply
$x_{5} = \dfrac{1}{\lambda - F_{5}} \, x_1$, $x_{6} = \dfrac{1}{\lambda - F_{6}} \, x_1$, $x_{7} = \dfrac{1}{\lambda - F_{7}} \, x_1$, and $x_{8} = \dfrac{1}{\lambda - F_{8}} \, x_1$.
Let us now focus on the first $\omega$ equations of the system (\ref{LS}).
We observe that the number of terms of the $j$-th equation ($j \le \omega$) is
\begin{equation*}
  \begin{cases}
  \left\lfloor \dfrac{n}{j} \right\rfloor+1& \text{if }\; j\neq1, \\[2mm]
                                       n &  \text{if }\; j=1.
  \end{cases}
\end{equation*}
Starting from the $\omega$-th equation, which always has $3$ terms (due to (\ref{omega})), we derive
\[
    x_1 + (F_{\omega} - \lambda) x_{\omega} + F_{\omega} x_{2\omega} = 0 \;\Longleftrightarrow\;
    x_{\omega} = \dfrac{1}{\lambda-F_{\omega}}  \left ( 1 + \dfrac{F_{\omega}}{\lambda-F_{2\omega}} \right ) x_1  ,
\]
where we have used (\ref{xk}). Notice that, since $\det(F_{R}(n))<0$ and due to (\ref{xk}), we confirm that the entry $x_{\omega}$ is also a nonzero multiple of $x_1$.
Working backwards with the $(\omega-1)$-th, $(\omega-2)$-th, $\ldots$, $3$-rd and $2$-nd equations, and in a similar way, we obtain that
the entries $x_{\omega-1}, x_{\omega-2},\ldots, x_3, x_2$ of the corresponding eigenvector $\mathbf{x}$ are also nonzero multiples of $x_1$.
Thus, all entries of all eigenvectors of $F_R (n)$ are nonzero.

>From the first equation of the system (\ref{LS}), that is,
$\left(1-\lambda\right)x_1 +x_2 +\cdots+x_{\omega}+x_{\omega+1}+\cdots+x_n = 0$,
we get
\begin{eqnarray}
  0 &=& \left(1-\lambda\right)x_1 + \frac{1}{\lambda -F_{2}}\left(x_{1}+x_{4}+x_{6}+x_{8}+\cdots\right) +\frac{1}{\lambda -F_{3}}\left(x_{1}+F_{3}x_{6}+F_{3}x_{9}+\cdots\right)  \nonumber \\
    & & +\, \frac{1}{\lambda -F_{4}}\left(x_{1}+F_{4}x_{8}+\cdots\right)+\cdots+\frac{1}{\lambda -F_{\omega}}\left(x_{1}+F_{\omega}x_{2\omega}+\cdots\right) + \sum_{k=\omega +1}^{n}\frac{1}{\lambda-F_{k}}x_1  .
        \;\;\; \label{xg}
\end{eqnarray}
The right-hand side of (\ref{xg}) can be written in the form $Q_{F_{R}(n)} (\lambda) \cdot x_1$, where $Q_{F_{R}(n)} (z)$ is a continuous function in $z \in \mathbb{R} \setminus \{F_2 , F_3 , \dots , F_n \}$.
For example, in the case of the matrix of Remark \ref{firstremark} (where $n=8$), we have
\begin{eqnarray}
   Q_{F_{R}(8)} (\lambda) &=& \left(1-\lambda\right) + \underbrace{\frac{1}{\lambda-1}\left[1+1\underbrace{\cdot \frac{1}{\lambda-3}\left(1 + 3 \cdot
                        \frac{1}{\lambda-21}\right )}_{x_{4}/x_{1}}+\underbrace{\frac{1}{\lambda-8}}_{x_6/x_1}+\underbrace{\frac{1}{\lambda-21}}_{x_8/x_1}\right]}_{x_{2}/x_{1}}   \nonumber    \\
                    & & + \underbrace{\frac{1}{\lambda-2}\left (1+2\cdot \frac{1}{\lambda-8}\right )}_{x_3/x_1}
                        + \underbrace{ \frac{1}{\lambda-3}\left (1+3\cdot \frac{1}{\lambda-21}\right )}_{x_4/x_1}+\sum_{k=5}^{8}\underbrace{\frac{1}{\lambda-F_{k}}}_{x_{k}/x_{1}} \label{ex01} \\
                    &=& 0 .  \nonumber
\end{eqnarray}
By the above discussion, it follows that $\lambda$ is an eigenvalue of $F_{R}(n)$ if and only if (\ref{LS}) holds for some nonzero $\mathbf{x}$, or equivalently, if and only if $Q_{F_{R}(n)} (\lambda) = 0$.
Moreover, $-(z-1) (z-2) \cdots (z-F_n) Q_{F_{R}(n)} (\lambda)$ is a monic polynomial of degree $n$.

The lines $\text{Re} (z) = F_{k}$, $k = 2 ,  3, \ldots , n$,
where $\text{Re} (z)$ denotes the real part of the scalar $z\in \mathbb{C}$,
are all vertical asymptotes of the graph of $Q_{F_{R}(n)} (z)$.
Moreover, it holds
\[
     \lim_{z \rightarrow -\infty}Q_{F_{R}(n)} (z) = + \infty
     \quad \mbox{and} \quad
     \lim_{z \rightarrow +\infty}Q_{F_{R}(n)} (z) = - \infty ,
\]
and for all $k = 2 , 3 , \dots , n$,
\[
     \lim_{z \rightarrow F_{k}^{-}}Q_{F_{R}(n)} (z) = - \infty
     \quad \mbox{and} \quad
     \lim_{z \rightarrow F_{k}^{+}}Q_{F_{R}(n)} (z) = +\infty.
\]
To confirm that the last two limits are true, first observe that the Fibonnacci sequence is strictly increasing for $n\geq 2$.
Next, by the definition of $F_R(n)$, the number $F_j$ appears only in the $j-$th row.
Easy induction implies that, for $k=1,2,\ldots,\omega$, and as $z \rightarrow F_{k}$, each term in the brackets following the fraction $\dfrac{1}{z-F_{k}}$ (for example, see (\ref{ex01})) is positive.
For $k=\omega, \omega+1,\ldots, n$, the interesting terms are of the form $\dfrac{1}{F_{j}} \left(1+F_{j}\,\dfrac{1}{\lambda-F_{k}}\right)$, where $F_{j}\mid F_{k}$.
They are also (by induction) positive as $\lambda \rightarrow F_{k}$. Hence, from Bolzano's theorem with regard to the eigenvalues of $F_R (n)$,
and keeping in mind that the determinant of $F_R (n)$ is negative, we obtain the following result:

\begin{theorem} \label{Bolzano}
All eigenvalues of the Fibonacci--Redheffer matrix $F_R (n)$ are real and simple.
Furthermore, if $\lambda_1 < \lambda_2 < \cdots < \lambda_n$ are the $n$ simple eigenvalues of $F_R (n)$, then
\begin{align}
  &\lambda_{1} < 0 ,                   \label{e1} \\
  &F_i <\lambda_i <F_{i+1} , \quad  i=2,3,\ldots,n-1, \label{eg} \\
  &\lambda_{n}>F_{n}   .                              \label{em}
\end{align}
\end{theorem}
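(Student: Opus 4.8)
The plan is to exploit the function $Q_{F_R(n)}$ constructed just before the statement, whose real zeros coincide exactly with the eigenvalues of $F_R(n)$ and whose sign behaviour near its poles and at $\pm\infty$ has already been determined. First I would record the partition of the real line induced by the vertical asymptotes. Since the Fibonacci numbers $F_2 < F_3 < \cdots < F_n$ are strictly increasing (for $n\geq 2$), the $n-1$ asymptotes $z = F_2 , F_3 , \ldots , F_n$ split $\mathbb{R}$ into exactly $n$ open intervals, namely $I_1 = (-\infty , F_2)$, then $I_i = (F_i , F_{i+1})$ for $i = 2 , 3 , \ldots , n-1$, and finally $I_n = (F_n , +\infty)$, on each of which $Q_{F_R(n)}$ is continuous.

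Next I would invoke the limit computations established above. On $I_1$ the function runs from $+\infty$ (as $z \to -\infty$) down to $-\infty$ (as $z \to F_2^-$); on each interior interval $I_i$ it runs from $+\infty$ (as $z \to F_i^+$) down to $-\infty$ (as $z \to F_{i+1}^-$); and on $I_n$ it runs from $+\infty$ (as $z \to F_n^+$) down to $-\infty$ (as $z \to +\infty$). Continuity together with Bolzano's theorem then yields at least one zero of $Q_{F_R(n)}$ in each of the $n$ intervals, hence at least one real eigenvalue $\lambda_i \in I_i$ of $F_R(n)$; because the intervals are pairwise disjoint, these eigenvalues are automatically distinct.

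The decisive step is the counting argument. Any $n \times n$ matrix has at most $n$ eigenvalues counted with algebraic multiplicity, yet we have just exhibited $n$ distinct real eigenvalues $\lambda_1 < \lambda_2 < \cdots < \lambda_n$, one per interval. Consequently these are \emph{all} the eigenvalues of $F_R(n)$: there are no non-real eigenvalues, and since $n$ distinct eigenvalues already exhaust the full multiplicity count $n$, each of them is simple. (Equivalently, $-(z-1)(z-2)\cdots(z-F_n)\,Q_{F_R(n)}(z)$ is the degree-$n$ characteristic polynomial, whose roots are precisely the zeros of $Q_{F_R(n)}$, since by Theorem \ref{different} none of the $F_k$ is an eigenvalue.) Labelling the eigenvalues in increasing order forces $\lambda_i \in I_i$ for every $i$, and reading off the interval endpoints yields (\ref{e1}), (\ref{eg}) and (\ref{em}) simultaneously.

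I expect the only genuine subtlety to be the bookkeeping that upgrades ``at least one root per interval'' to ``exactly one simple root per interval''. This is purely the consequence of matching the $n$ distinct real roots already found against the degree-$n$ eigenvalue count, so no additional analytic work — such as proving monotonicity of $Q_{F_R(n)}$ on each interval — is needed. All the analytic inputs, namely continuity on each interval and the one-sided limits at the poles and at infinity, have been verified in the discussion preceding the theorem, so the argument reduces to the intermediate value theorem plus counting.
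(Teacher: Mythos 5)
Your proposal is correct and follows essentially the same route as the paper: the paper's proof of Theorem \ref{Bolzano} is precisely the discussion preceding it (the construction of $Q_{F_R(n)}$, its continuity away from $F_2,\ldots,F_n$, the one-sided limits at the poles and at $\pm\infty$) combined with Bolzano's theorem on the $n$ intervals determined by the asymptotes, exactly as you argue. Your explicit counting step --- $n$ distinct real zeros against the degree-$n$ monic polynomial $-(z-1)(z-2)\cdots(z-F_n)\,Q_{F_R(n)}(z)$, forcing realness, simplicity, and one eigenvalue per interval --- is left implicit in the paper but is the intended completion of its argument.
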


\begin{remark}  \em
By the Gershgorin circle theorem (see e.g., \cite{HJ}, pp. 344--346), the eigenvalues of $F_R (n)$ lie in the union of the closed circular disks
\[
   \mathcal{D}_i (F_R (n)) = \left \{ z \in \mathbb{C} :\, \left | z - F_R (i,i) \right | \le \sum_{j \ne i} \left | F_R (i,j) \right | \right \} , \quad i = 1 , 2 , \ldots , n ,
\]
known as the \textit{Gershgorin disks} of $F_R (n)$.
We observe that, for $n \geq 10$, each one of the Gershgorin disks
\[
   \mathcal{D}_i (F_R (n)) = \left \{ z \in \mathbb{C} :\, \left | z - F_i \right | \le 1  \right \} , \quad i = \omega+2 , \omega+3 , \ldots , n ,
\]
has no common points with any other Gershgorin disk of $F_R (n)$ and its radius is equal to $1$; it is worth mentioning that the disk
$\mathcal{D}_{\omega+1} (F_R (n)) = \left \{ z \in \mathbb{C} :\, \left | z - F_{\omega+1} \right | \le 1 \right \}$ always lies in the disk
$\mathcal{D}_{\omega} (F_R (n)) = \left \{ z \in \mathbb{C} :\, \left | z - F_{\omega} \right | \le 1 + F_{\omega} \right \}$.
As a consequence, for $n \geq 10$,
\begin{equation} \label{bound1}
    F_i < \lambda_i < F_i + 1 , \quad i = \omega+2 , \omega+3 , \ldots , n .
\end{equation}
\end{remark}

For the negative eigenvalue $\lambda_1$ of the Fibonacci--Redheffer matrix $F_R (n)$, we will prove the following nice enclosure.

\begin{theorem} \label{L1}
For the smallest eigenvalue $\lambda_1$ of the Fibonacci--Redheffer matrix $F_R (n)$, we have
\begin{equation}
     - 1 < \lambda_{1} < 0.      \label{l1}
\end{equation}
\end{theorem}

To prove (\ref{l1}) it is enough for one to verify that $\det \left(F_{R}(n) + I_{n} \right) > 0$.
As in Theorem \ref{firsttheorem}, we have $F_R (n) + I_{n} = C (n) + \hat{D}(n)$, where $\hat{D}(n) = D(n) + I_n$,
and $C(n)$ and $D(n)$ are as defined in (\ref{C}) and (\ref{D}), respectively.
The following lemma can be obtained by straightforward calculations (see also the proof of Lemma \ref{firstlemma}).

\begin{lemma} \label{secondlemma}
The inverse of the matrix $\hat{D}(n)$ is an upper-triangular matrix given by the recursive formula
\[
   \hat{D}(n)^{-1} := \left [ \hat{\delta}_{i,j} \right ] , \;\; \text{where} \quad
   \hat{\delta}_{i,j} =
   \begin{cases}
   \dfrac{1}{F_j+1} & \text{if}\quad i=j, \\[10pt]
   -\dfrac{F_i}{F_i+1}\displaystyle\sum_{m=2}^{\lfloor j/i\rfloor}\hat{D}^{-1}_{mi,\,j} & \text{if}\quad i\mid j, \,\, i \ne j, \\[10pt]
   0 & \text{otherwise}.
\end{cases}
\]
\end{lemma}

For example,
$$
\hat{D}^{-1}(8)=
\begin{pmatrix}
\frac12 & -\frac14 & -\frac16 & -\frac1{16} & -\frac1{12} & \frac1{108} & -\frac1{28} & -\frac1{352}\\
0 & \frac12 & 0 & -\frac18 & 0 & -\frac1{18} & 0 & -\frac1{176}\\
0 & 0 & \frac13 & 0 & 0 & -\frac2{27} & 0 & 0\\
0 & 0 & 0 & \frac14 & 0 & 0 & 0 & -\frac3{88}\\
0 & 0 & 0 & 0 & \frac16 & 0 & 0 & 0\\
0 & 0 & 0 & 0 & 0 & \frac19 & 0 & 0\\
0 & 0 & 0 & 0 & 0 & 0 & \frac1{14} & 0\\
0 & 0 & 0 & 0 & 0 & 0 & 0 & \frac1{22}
\end{pmatrix}.
$$

\textbf{Proof of Theorem \ref{L1}.}\
As in Theorem \ref{firsttheorem}, it suffices to prove that the $(1,1)$-th entry of the matrix $I_n + \hat{D}^{-1} C $ is positive.
Let $\mathbf{c} = \begin{bmatrix} 0 & 1 & 1 & \cdots & 1 \end{bmatrix}^{T}$
and consider the column vector $\mathbf{x} = \hat{D}^{-1} \mathbf{c}$.
The first row of $\hat{D} \mathbf{x} = \mathbf{c}$ yields $2 x_{1} + \sum_{j=2}^{n} x_{j} = 0$.
As a consequence, we only have to prove that $\sum\limits_{j=2}^{n}x_{j} < 2$.
It is an easy exercise (induction) to see that
\[
    0 \leq x_{j} \leq \frac{1}{F_{j}+1} , \quad j = 2 , \ldots , n .
\]
We have (keeping in mind (\ref{phi}))
\begin{align*}
  \sum_{j=2}^{n}x_{j} & \leq \sum_{j=2}^{n}\frac{1}{F_{j}+1}<\sum_{j=2}^{\infty}\frac{1}{F_{j}+1} = \sum_{j=2}^{8}\frac{1}{F_{j}+1}+\sum_{j=9}^{\infty}\frac{1}{F_{j}+1}\\
                      & < 1.479 + \sum_{j=9}^{\infty}\frac{1}{F_{j}} \leq 1.479+\sum_{j=9}^{\infty}\frac{1}{\phi^{j-2}}\approx1.569<2,
\end{align*}
and the proof is completed.     $\hfill\blacksquare$

\bigskip

Since the sum of the eigenvalues is equal to the trace, Theorem \ref{L1} yields the following substantial improvement of Theorem \ref{Bolzano}.

\begin{corollary} \label{final}
If $n \geq 10$ and $\lambda_1 < \lambda_2 < \cdots < \lambda_n$ are the $n$ simple eigenvalues of $F_R (n)$, then
\begin{align*}
  & -1 < \lambda_1 < 0 ,         \\
  &  1 < \lambda_2 < 2 ,         \\
  & F_i < \lambda_i < F_i + 2 , \quad  i=3,4,\ldots,w+1,  \\
  & F_i < \lambda_i < F_i + 1 , \quad  i=w+2,w+3,\ldots,n.
\end{align*}
\end{corollary}

Figure 1 below illustrates the behavior of the function $Q_{F_{R}(5)} (z)$ (i.e., $n=5$).
The roots of this function are exactly the eigenvalues of $F_{R}(5)$.
The vertical asymptotes of the function, as well as, the solo negative eigenvalue are clearly visible.

\begin{figure}[h!]
  \centering
  \includegraphics[width=0.70\textwidth]{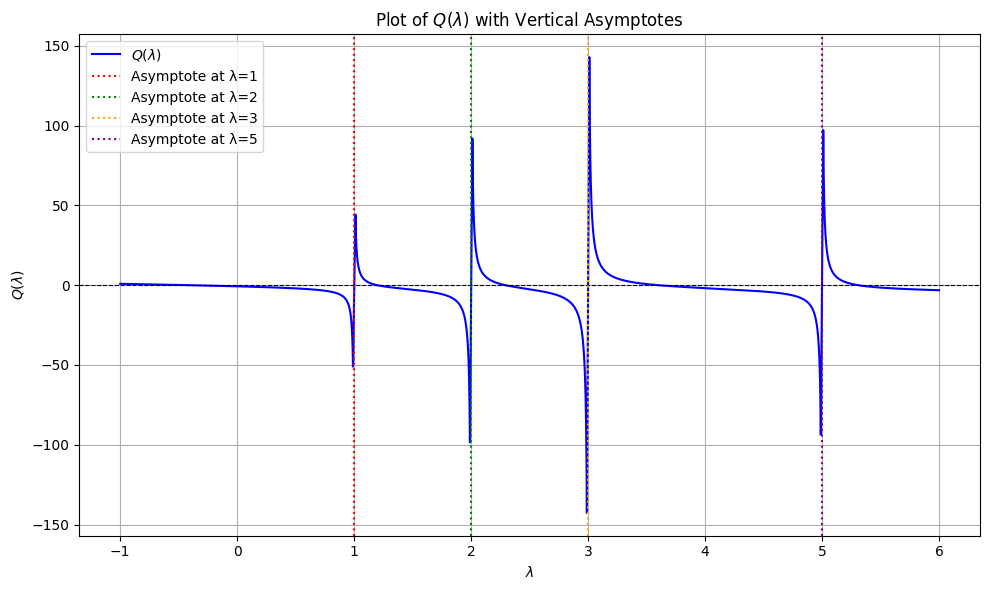}
  \caption{Plot of the function $Q_{F_{R}(5)} (z)$.
           The roots are the eigenvalues of $F_R(5)$.
           The lines $\text{Re} (z) = 1 , 2 , 3 , 5$ are vertical asymptotes and ``close'' to the eigenvalues.}
\end{figure}

\begin{remark}  \em
As mentioning in the proof of Theorem \ref{secondtheorem},
\[
  \rho(F_R(n)) \sim  \frac{\phi^{n}}{\sqrt{5}} \quad \text{as} \;\; n \rightarrow \infty.
\]
By the well known (and easy to prove) identity of the Fibonacci numbers
$
    \sum\limits_{k=1}^{n} F_{k} = F_{n+2} - 1 ,
$
it follows that
\[
  \text{tr} \left(F_R (n)\right) \sim  \frac{\phi^{n+2}}{\sqrt{5}} \quad \text{as} \;\; n \rightarrow \infty.
\]
\end{remark}

\section{Generalizations}

In this section, we consider Redheffer--type matrices with entries coming from a sequence $\alpha$.
In particular, for a \textit{sequence of strictly positive terms} $\alpha = \left \{ a_j \right \}_{j=1}^{n}$,
we introduce the following $n \times n$ \textit{Redheffer--type matrix} $A_R (n) = \left [ A_{R}(i,j) \right ]$ with entries
\begin{equation}
  A_{R}(i,j): = \begin{cases}
          1 & \text{if}\quad j=1,\,\,i \neq 1,\\
          a_{i} & \text{if}\quad i\mid j,\\
          0 & \text{otherwise}.                    \label{20}
       \end{cases}
\end{equation}
For example, for $n=8$,
$$
A_R(8)=\begin{pmatrix}
 a_1 & a_1 & a_1 & a_1 & a_1& a_1 & a_1 & a_1 \\
 1   & a_2 & 0   & a_2 & 0  & a_2 & 0   & a_2 \\
 1   & 0   & a_3 & 0   & 0  & a_3 & 0   & 0   \\
 1   & 0   & 0   & a_4 & 0  & 0   & 0   & a_4 \\
 1   & 0   & 0   & 0   & a_5& 0   & 0   & 0   \\
 1   & 0   & 0   & 0   & 0  & a_6 & 0   & 0   \\
 1   & 0   & 0   & 0   & 0  & 0   & a_7 & 0   \\
 1   & 0   & 0   & 0   & 0  & 0   & 0   & a_8   \end{pmatrix} .
$$

Here, we will mainly study the asymptotic behaviour of the determinant of $A_R (n)$ as $n\rightarrow \infty$.
Following the steps of Section 2, we can obtain the following result.

\begin{theorem}
Consider the Redheffer--type matrix $A_{R}(n)$ defined by (\ref{20}) corresponding to a sequence $\alpha = \left\{ a_j \right\}_{j=1}^{n}$ of strictly positive terms.
Then, it holds
\begin{equation}\label{310}
  \det(A_R (n)) = \left ( \prod_{j=1}^{n} a_j \right ) \left [ 1 + \sum_{k=2}^{n}\frac{\mu(k)}{a_{k} } \right ] .
\end{equation}
\end{theorem}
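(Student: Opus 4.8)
The plan is to transcribe the proof of Theorem \ref{firsttheorem} almost verbatim, since the only property of the Fibonacci numbers used there was that they are nonzero (guaranteeing that the relevant upper-triangular matrix is invertible), and the strict positivity of $\alpha$ supplies exactly this. First I would record the analogue of the decomposition (\ref{rep}), namely $A_R(n) = C(n) + D_a(n)$, where $C(n) = [c_{i,j}]$ is the same rank-one matrix with $c_{i,j}=1$ if and only if $j=1$ and $i\neq1$, and $D_a(n)=[d_{i,j}]$ is the upper-triangular matrix with $d_{i,j}=a_i$ when $i\mid j$ and $d_{i,j}=0$ otherwise. Since every $a_j>0$, we have $\det(D_a(n)) = \prod_{j=1}^n a_j \neq 0$, so $D_a(n)$ is nonsingular.

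Next I would establish the exact analogue of Lemma \ref{firstlemma}, namely that $D_a(n)^{-1} = [\tilde d_{i,j}]$ with $\tilde d_{i,j} = \frac{1}{a_j}\mu\!\left(\frac{j}{i}\right)$ when $i\mid j$ and $\tilde d_{i,j}=0$ otherwise. The verification is word-for-word that of Lemma \ref{firstlemma}: in computing the $(i,j)$ entry of $D_a(n)D_a(n)^{-1}$, only terms with $i\mid k$ and $k\mid j$ survive, and after writing $k=mi$ and $d=j/i$ the entry reduces to $\frac{a_i}{a_j}\sum_{m\mid d}\mu(d/m)$, which by the M\"obius identity (\ref{mobius}) equals $1$ for $i=j$ and $0$ otherwise. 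I stress that this step invokes only $a_j\neq0$ together with the divisibility structure, never any Fibonacci-specific relation.

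With the inverse in hand I would factor
\[
  \det(A_R(n)) = \det(D_a(n))\,\det\!\bigl(D_a(n)^{-1}(C(n)+D_a(n))\bigr) = \det(D_a(n))\,\det\!\bigl(D_a(n)^{-1}C(n)+I_n\bigr).
\]
Because $C(n)$ has nonzero entries only in its first column, with vanishing $(1,1)$ entry, the same is true of $D_a(n)^{-1}C(n)$; hence $D_a(n)^{-1}C(n)+I_n$ is lower triangular with all diagonal entries equal to $1$ except the first. That first diagonal entry equals $1+\sum_{k=2}^n \frac{\mu(k)}{a_k}$, the extra summand being the $(1,1)$ entry of $D_a(n)^{-1}C(n)$, i.e.\ $\sum_{k=2}^n \tilde d_{1,k} = \sum_{k=2}^n \frac{\mu(k)}{a_k}$ (the sum starting at $k=2$ precisely because the $(1,1)$ entry of $C(n)$ is zero). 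This mirrors the first diagonal entry $-1-\frac12-\frac15+\cdots = \sum_{k=2}^n \frac{\mu(k)}{F_k}$ appearing in the proof of Theorem \ref{firsttheorem}. Consequently $\det(D_a(n)^{-1}C(n)+I_n) = 1+\sum_{k=2}^n \frac{\mu(k)}{a_k}$, and multiplying by $\det(D_a(n)) = \prod_{j=1}^n a_j$ gives (\ref{310}).

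I do not expect a genuine obstacle: the argument is a faithful transcription of the Fibonacci case, and the single place where the hypothesis is needed is the invertibility of $D_a(n)$, ensured by the strict positivity of the $a_j$. The only care required is to confirm that the M\"obius-inversion computation of $D_a(n)^{-1}$ uses nothing beyond $a_j\neq0$, which is clear upon inspecting the proof of Lemma \ref{firstlemma}.
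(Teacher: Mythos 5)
Your proposal is correct and follows essentially the same route as the paper: the paper offers no separate argument for this theorem, stating only that it follows ``the steps of Section 2,'' which is precisely the transcription you carried out (the decomposition $A_R(n)=C(n)+D_a(n)$, the M\"obius-inversion formula for $D_a(n)^{-1}$, and the triangular determinant $\det\left(D_a(n)^{-1}C(n)+I_n\right)$). You also correctly handle the one point where the general case differs from the Fibonacci case, namely that $\mu(1)/a_1$ need not equal $1$, which is why the formula reads $1+\sum_{k=2}^{n}\mu(k)/a_k$ rather than $\sum_{k=1}^{n}\mu(k)/a_k$, and you rightly observe that only $a_j\neq 0$ (not positivity) is actually used.
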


In the general case, and for any fixed $n$, there are sequences $\alpha = \left\{a_j\right\}_{j=1}^{n}$ (easy to construct) such that the matrix $A_R (n) $ is singular.
When $\det (A_R (n)) \neq 0$, is possible that all eigenvalues are positive or not.
There are even cases where a diagonal entry is itself an eigenvalue.
For example, if we consider the (finite) strictly increasing sequences $\alpha:=\left\{a_j\right\}_{j=1}^{j=6} = \{2,3,4,5,6,7\}$ and $\beta:=\left\{b_j\right\}_{j=1}^{j=6}=\left\{\frac{1}{2},1,3,4,5,6\right\}$
and the corresponding Redheffer--type matrices $A_R (6)$ and $B_R (6)$, then
\[
    \sigma(A_R (6)) = \left \{ 0.77339, 2.31234, 3.94591, 5.41018, 6.21976, 8.33843 \right \}
\]
and
\[
    \sigma(B_R (6)) = \left \{ -0.14169, 1.18251, 3, 4.14065, 5.09707, 6.22145 \right \} .
\]
So, $A_R (6)$ has all its eigenvalues positive, while $B_R (6)$ has a negative eigenvalue and an eigenvalue equal to a diagonal entry.
It is worth mentioning that any eigenvector of the matrix $B_R (6)$ corresponding to the simple eigenvalue $3$ (which is a diagonal entry of $B_R (6)$) is of the form
$(0,0,-t,0,0,t)^{T}$ $(t \in \mathbb{R})$, and the sum of its entries is zero (see Lemma \ref{sav}).

Moreover, there are cases where a Redheffer--type matrix has multiple eigenvalues.
For example, if we consider the strictly increasing sequence $\gamma:=\left\{c_j\right\}_{j=1}^{j=4} = \{a,7,8,9\}$ $( 0 < a < 7)$
and the corresponding Redheffer--type matrix $C_R (4)$, then it is easy to see that for $a=6.380028754425$,
\[
    \sigma(C_R (4)) = \left \{ 4.77497, 4.77497, 8.13053, 12.69956 \right \} .
\]
In this special case, $C_R (4)$ has a double eigenvalue. It is worth mentioning that, as the parameter $a$ increases beyond the value $6.380028754425$, the double eigenvalue $4.77497$
splits into a pair of complex conjugate simple eigenvalues; this behaviour, in particular, confirms the existence of the double eigenvalue.    

Concerning the asymptotics of the determinant of $A_R (n)$, as $n \rightarrow \infty$, the problem may be treated as two separate problems, namely,
the behaviour of the product of the terms of the sequence $\alpha$ and the asymptotics of the sum $\sum\limits_{k=1}^{n}\dfrac{\mu(k)}{a_{k}}$.
Next, we will illustrate two intriguing examples.

\begin{example} \em
Let $a_j = j^p$, with $p>1$. In this case, $a_{1}=1$ and $\mu(1)=1$. Hence, (\ref{310}) yields
\begin{equation}\label{310b}
     \det (A_R (n)) = \left ( \prod_{j=1}^{n} a_j \right ) \sum_{k=1}^{n} \frac{\mu(k)}{a_{k}}.
\end{equation}
By Stirling's formula \cite{KPG,Lin}, and as $n \rightarrow \infty$, we have
$\prod\limits_{j=1}^{n}a_j = \left(n!\right)^p \sim \left(2n \pi\right)^{\frac{p}{2}}  \left(\dfrac{n}{e}\right)^{np}$.
On the other hand,
\begin{equation}
     \sum_{k=1}^{n}\frac{\mu(k)}{k^p}\sim \frac{1}{\zeta(p)},     \label{inverse}
\end{equation}
where we have used that the Dirichlet series, which generates the M$\ddot o$bius function, is the (multiplicative) inverse of the {\it Riemann zeta function} $\zeta(p)$, as long as $p>1$.
Hence, as $n \rightarrow \infty$,
\begin{equation*}
      \det (A_R (n)) \sim \zeta(p)^{-1}\left(2n \pi\right)^{\frac{p}{2}}  \left(\frac{n}{e}\right)^{np} .
\end{equation*}
Notice that in the case where $p=2$, the corresponding sum of (\ref{inverse}) plays a central role in the average order of the {\it Euler's totient function} $\phi(n)$ (see, e.g. \cite{APOSTOL} for details).
We would also like to remind to the reader that, if $p=1$ (which, however, is not the case here), then
$\sum\limits_{k=1}^{n}\dfrac{\mu(k)}{k} = \dfrac{1}{1} - \dfrac{1}{2} - \dfrac{1}{3} - \dfrac{1}{5} + \dfrac{1}{6} - \dfrac{1}{7} + \cdots = o (1)$, as $n\rightarrow \infty$.
This is well known that implies the prime number theorem, i.e., $\pi(n) \sim \dfrac{n}{\log n}$ as $n\rightarrow \infty$, where $\pi(n)$ is the {\it prime counting function}.
\end{example}

\begin{example}  \em
Consider the sequence $\alpha = \left\{a_j\right\}_{j=2}^{\infty}$, with $a_{j} = \dfrac{j}{\ln^{2} j}$, and the square matrix
\begin{equation}
  A_{R}(i,j): = \begin{cases}
          1 & \text{if}\quad j=1,\,\,i \neq 1,\\
          a_{i+1} & \text{if}\quad i\mid j,\\
          0 & \text{otherwise}.\label{21}
       \end{cases}
\end{equation}
We have
\begin{equation}\label{ex2a}
          \ln\left(\prod_{k=2}^{n}\frac{k}{\ln^{2} k}\right)=\sum_{k=2}^{n}\ln k - 2\sum_{k=2}^{n} \ln \left(\ln k\right).
\end{equation}
By applying the celebrated Euler--Maclaurin summation formula (see, e.g. \cite{B-O}) in the two sums above, we get
\begin{equation}
  \sum_{k=2}^{n}\ln k = n\ln n - n +\frac{1}{2}\ln n+\frac{1}{2}\ln\left(2\pi\right)+\frac{1}{12n} +O\left(\frac{1}{n^3}\right)           \label{ex2b}
\end{equation}
and
\begin{equation}
  \sum_{k=2}^{n} \ln \left(\ln k\right) = n \ln \left(\ln n\right)-\frac{n}{\ln n}+  O\left(\frac{n}{\left( \ln n\right)^{2}}\right) ,      \label{ex2c}
\end{equation}
where we have also used the asymptotics of the Logarithmic integral, namely,
\[
       \text{Li} (n):=\int_{2}^{n}\frac{dt}{\ln t} = \frac{n}{\ln n}\left[1+O\left(\frac{1}{\ln n}\right)\right].
\]
On the other hand, as $n\rightarrow \infty$,
\begin{equation}
        \sum_{k=2}^{n} \frac{\mu(k)\ln^{2}k}{k}\sim -2 \gamma  ,\label{ex2d}
\end{equation}
where $\gamma=0.57721\cdots $ is the Euler--Mascheroni constant.
Relation (\ref{ex2d}) follows from the Laurent expansion of the zeta function near $s=1$, namely,
$\zeta(s)=\dfrac{1}{s-1}+\gamma+O\left(s-1\right)$. As a consequence,
$\dfrac{1}{\zeta(s)}=s-1-\gamma \left(s-1\right)^{2}+O\left(s-1\right)^{3}$.
By differentiating twice (with respect to $s$), at $s=1$, it follows
\[
           \frac{1}{\zeta (s)}=\sum_{k=1}^{\infty}\frac{\mu(k)}{k^s}.
\]
(see also the relation (\ref{inverse})).
By replacing (\ref{ex2b}) and (\ref{ex2c}) in (\ref{ex2a}) and exponentiating, and finally, by invoking (\ref{ex2d}) in (\ref{310}),
we get leading asymptotics for the quantity $\det(A_R (n))$, as $n\rightarrow \infty$,
\[
     \det (A_R (n)) \sim - 2 \gamma \left ( \frac{n}{\ln^2 n} \right )^{n} e^{-n} e^{\frac{2n}{\ln n}+O\left(\frac{n}{\ln^2 n}\right)}.
\]
Notice that the sequence of this example is strictly decreasing for $2\leq n \leq 7$ and strictly increasing for $n\geq 8$, yet with all its terms positive.
The eigenvalues of the corresponding Redheffer--type matrix $A_{R}(n)$ are all real.
\end{example}

Finally, let us consider the following variant of the Fibonacci--Redheffer matrix $F_R (n)$ (see (\ref{2})).
Suppose that all the entries of the matrix are the same except (and this is the only difference) the $(1,1)$-th entry, which in some sense, appears to be the most interesting entry of the matrix.
In particular, we set $a_{1,1} := 1 + b$ for some $b > -1$,
denote the new matrix by $\tilde{ F_R} (n)$, and ask the following question: \textit{are there values of $b$, for which the matrix $\tilde{ F_R} (n)$ is singular?}
By mimicking our approach of Section 2, we can obtain that
\begin{equation}\label{30}
  \det(\tilde{ F_R} (n)) = \prod_{j=1}^{n}\lambda_j = n!_{F} \left(b+\sum_{k=1}^{n} \frac{\mu(k)}{F_{k}}\right) ,
\end{equation}
where $n!_{F}$ is, again, the Fibonacci factorial. Hence, the answer to the above question is \textit{yes, if}
\begin{equation*}
           b = -\sum_{k=1}^{n}\frac{\mu(k)}{F_{k}} .
\end{equation*}
Clearly, if $n\rightarrow \infty$, then in the limit, we have (thanks to Theorem \ref{secondtheorem}) $b=-C \approx 0.64572472\cdots$.

\bigskip
\textbf{Acknowledgements.}\,
The authors gratefully acknowledge Professor Dimitrios Koukoulopoulos' crucial contribution to the proof of Theorem \ref{secondtheorem}, as well as
Professor Sergey Savchenko for stimulating communication which led to Lemma \ref{sav}. The first author has been partially supported by project MIS 5154714 of the National Recovery and Resilience Plan Greece 2.0 funded by the European Union under the NextGenerationEU Program.


\small


\end{document}